\renewcommand{\geq}{\geqslant}
\renewcommand{\leq}{\leqslant}
\newtheorem{theorem}{Theorem}[section]
\newtheorem{lemma}[theorem]{Lemma}
\date{}
\begin{document}
\title{Twofold triple systems with cyclic 2-intersecting Gray codes}

\author{Aras Erzurumluo\u{g}lu\thanks{e-mail address: \texttt{aerzurumluog@mun.ca}}~}

\author{David A.~Pike\thanks{e-mail address: \texttt{dapike@mun.ca}}}

\affil{Department of Mathematics and Statistics \\
       Memorial University of Newfoundland\\
       St.~John's, NL, Canada A1C 5S7}

\date{\today}

\maketitle

\makeatletter

\def\@maketitle{%
  \newpage
  \null
  \vskip 2em%
  \begin{center}%
  \let \footnote \thanks
    {\Large\bfseries \@title \par}%
    \vskip 1.5em%
    {\normalsize
      \lineskip .5em%
      \begin{tabular}[t]{c}%
        \@author
      \end{tabular}\par}%
  \end{center}%
  \par
  \vskip 1.5em}
\makeatother

\begin{abstract}
Given a combinatorial design $\mathcal{D}$ with block set $\mathcal{B}$, the \textit{block-intersection graph} (BIG) of $\mathcal{D}$ is the graph that has $\mathcal{B}$ as its vertex set, where two vertices $B_{1} \in \mathcal{B}$ and $B_{2} \in \mathcal{B} $ are adjacent if and only if $|B_{1} \cap B_{2}| > 0$. The \textit{$i$-block-intersection graph} ($i$-BIG) of $\mathcal{D}$ is the graph that has $\mathcal{B}$ as its vertex set, where two vertices $B_{1} \in \mathcal{B}$ and $B_{2} \in \mathcal{B}$ are adjacent if and only if $|B_{1} \cap B_{2}| = i$. In this paper several constructions are obtained that start with twofold triple systems (TTSs) with Hamiltonian $2$-BIGs and result in larger TTSs that also have Hamiltonian $2$-BIGs. These constructions collectively enable us to determine the complete spectrum of TTSs with Hamiltonian $2$-BIGs (equivalently TTSs with cyclic $2$-intersecting Gray codes)
as well as the complete spectrum for TTSs with $2$-BIGs that have Hamilton paths (i.e., for TTSs with $2$-intersecting Gray codes).

In order to prove these spectrum results, we sometimes require ingredient TTSs that have large partial parallel classes; we prove lower bounds on the sizes of partial parallel clasess in arbitrary TTSs
, and then construct larger TTSs with both cyclic $2$-intersecting Gray codes and parallel classes.


\end{abstract}

\vspace*{\baselineskip}
\noindent
Keywords:  triple system; block-intersection graph; Hamilton cycle; Gray code; partial parallel class

\vspace*{\baselineskip}
\noindent
AMS subject classifications:  05B05, 05B07, 05C38

\section{Introduction}

A \textit{combinatorial design} $\mathcal{D}$ is made up of a set $V$ of elements (called \textit{points}), together with a set $\mathcal{B}$ of subsets (called \textit{blocks}) of $V$. A \textit{balanced incomplete block design}, BIBD($v,k,\lambda$), is a combinatorial design in which (i) $|V|=v$, (ii) for each block $B\in \mathcal{B}$, $|B|=k$, and (iii) each 2-subset of $V$ occurs in exactly $\lambda$ blocks of $\mathcal{B}$. In the particularly well-studied case where $k=3$ and $\lambda =1$, the design is called a \textit{Steiner triple system} (STS($v$)); when $k=3$ and $\lambda =2$ it is a \textit{twofold triple system} (TTS($v$)).
More than 70 years ago the existence of TTSs was settled by Bhattacharya~\cite{bhattacharya} who proved that a TTS($v$) exists if and only if $v \equiv 0$ or $1$ (modulo 3), $v \neq 1$. A twofold triple system with block set $\mathcal{B}$ is said to be \textit{simple} if each block in $\mathcal{B}$ is distinct. About 40 years ago van~Buggenhaut ~\cite{buggenhaut} proved that simple twofold triple systems exist if and only if $v \equiv 0$ or $1$ (modulo 3), $v \notin \{1,3\}$ (a different proof of this result can be found in~\cite{stinson}). The more general existence question of simple $\lambda$-fold triple systems was then settled by Dehon~\cite{Dehon1983}.


The \textit{block-intersection graph} (BIG) of a combinatorial design $\mathcal{D}$ with block set $\mathcal{B}$ is the graph that has $\mathcal{B}$ as its vertex set where two vertices $B_{1} \in \mathcal{B}$ and $B_{2} \in \mathcal{B} $ are adjacent if and only if $|B_{1} \cap B_{2}| > 0$. Similarly, the \textit{$i$-block-intersection graph} ($i$-BIG) of $\mathcal{D}$ is the graph that has $\mathcal{B}$ as its vertex set where two vertices $B_{1} \in \mathcal{B}$ and $B_{2} \in \mathcal{B}$ are adjacent if and only if $|B_{1} \cap B_{2}| = i$.
In \cite{dewar} Dewar and Stevens study the structure of codes from a design theoretic perspective. In particular, they define the notion of a $\kappa$-intersecting Gray code (resp.\ cyclic $\kappa$-intersecting Gray code),
which is equivalent to a Hamilton path (resp.\ Hamilton cycle) in the $\kappa$-BIG of the corresponding design.

There is a rich history of studying cycle properties of block-intersection graphs of designs,
dating back to the 1980s when Ron Graham is cited as having asked whether Steiner triple systems
have Hamiltonian BIGs, which is indeed true for any BIBD$(v,k,\lambda)$~\cite{AHM1990,HR1988}.
When $\lambda=1$, the traditional BIG of a design is the same as its 1-BIG, whereas the 1-BIG is a subgraph of the BIG whenever $\lambda \geq 2$.
For $1$-BIGs of triple systems having index $\lambda \geq 2$,
it was proven in~\cite{HPR1999} that every BIBD$(v,3,\lambda)$ with $v \geq 12$ has a Hamiltonian 1-BIG. 
Furthermore, in~\cite{JPS2011} it was shown that for $v \geq 136$, the 1-BIG of every BIBD$(v,4,\lambda)$ is Hamiltonian.
For designs with block sizes $k \in \{5,6\}$ similar results have also been established (as is noted in~\cite{dewar}).

With regard to 2-BIGs and triple systems, 
it is not the case that every TTS$(v)$ has a Hamiltonian 2-BIG.
Mahmoodian observed that the unique (up to isomorphism) TTS on six points has the Petersen graph as its 2-BIG~\cite{mahmoodian},
and even earlier Colbourn and Johnstone had demonstrated a TTS(19) with a connected non-Hamiltonian 2-BIG~\cite{CJ1984}.
It has recently been established that TTSs with connected non-Hamiltonian 2-BIGs exist for most admissible orders;
in particular, the current authors have completely determined the spectrum for 
TTSs that do not have cyclic 2-intersecting Gray codes but which nevertheless have connected 2-BIGs~\cite{firstpaper}.

\begin{theorem}
There exists a \textnormal{TTS($v$)} with a connected non-Hamiltonian \textnormal{$2$-BIG} if and only if $v=6$ or $v \geq 12$, 
$v \equiv 0$ or $1$ (modulo $3$).
\end{theorem}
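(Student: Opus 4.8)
The statement is an ``if and only if,'' so the plan is to prove necessity (no such TTS exists for the excluded orders) and sufficiency (such a TTS exists for every claimed order) separately. For necessity, Bhattacharya's theorem already restricts attention to $v \equiv 0$ or $1 \pmod 3$ with $v \neq 1$, so I only need to rule out $v \in \{3,4,7,9,10\}$. The cases $v=3$ and $v=4$ I would dispatch by hand: the unique TTS$(3)$ consists of a doubled block, so its $2$-BIG is two isolated vertices and is disconnected, while the unique TTS$(4)$ is simple and every pair of its four blocks meets in two points, giving $K_4$, which is Hamiltonian. For $v \in \{7,9,10\}$ I would enumerate, up to isomorphism, all TTSs of each such order (via an orderly or canonical-augmentation generation, or an existing catalogue) and verify computationally that each resulting $2$-BIG is either disconnected or Hamiltonian; since the number of isomorphism classes is modest for these orders, this is a finite check.

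For the sufficiency direction it is convenient to first record a structural observation that makes the whole problem tractable. In a simple TTS each pair $\{a,b\}$ lies in exactly two necessarily distinct blocks, which therefore meet in exactly two points; moreover two distinct pairs of a given block cannot share the same partner block, so every block is adjacent in the $2$-BIG to exactly three others. Hence the $2$-BIG of a simple TTS$(v)$ is a cubic graph on $v(v-1)/3$ vertices with $\binom{v}{2}$ edges. Since a Hamiltonian graph on at least three vertices is necessarily $2$-connected, it suffices, for each admissible $v$, to produce a simple TTS$(v)$ whose $2$-BIG is connected yet not $2$-connected, i.e.\ possesses a cut vertex. The order $v=6$ is the sole sporadic exception: the unique TTS$(6)$ has the Petersen graph as its $2$-BIG (Mahmoodian), which is $3$-connected but famously non-Hamiltonian, and this settles $v=6$ directly.

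For all $v \geq 12$ the plan is a base-plus-recursion argument. I would first exhibit, for the four residues represented by $v \in \{12,13,15,16\}$ (covering $0,1,3,4 \pmod 6$), explicit simple TTSs whose cubic $2$-BIGs are connected and contain a cut vertex; such a certificate is easy to verify, since one simply displays the cut vertex together with the two sides it separates. I would then give a recursive construction that, from a TTS$(v)$ with the desired property, builds a TTS$(v+6)$ retaining it, so that iterating from each base case covers every admissible $v \geq 12$. The natural attempt adjoins six points and distributes the cross-pairs between old and new points among carefully chosen triples so that every pair is covered exactly twice, the two parts of the $2$-BIG become connected, and a cut vertex survives at the interface.

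The main obstacle is exactly this recursive step, because connectivity and non-Hamiltonicity pull in opposite directions. No triple can cover only cross-pairs (any three points split across two groups force a within-group pair), so the construction cannot be a clean overlay of two independent systems: the cross-pairs must be woven into triples that also disturb existing blocks, which forces trade-like re-covering of some old pairs, and one must track precisely which new edges and which new size-two intersections this creates in the $2$-BIG. The delicate point is arranging the weaving so that the combined graph becomes connected while a genuine cut vertex remains, since it is easy to accidentally create an alternative path that restores $2$-connectivity. I expect that certifying the persistence of the cut vertex, rather than the TTS bookkeeping for coverage, is where the real care is needed: the crux of the argument is to show that the vertex-cut obstruction $c(G-S) > |S|$ witnessed in each base case is never destroyed by the recursion, so that every constructed $2$-BIG remains connected and non-Hamiltonian.
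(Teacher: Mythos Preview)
Your sufficiency strategy for $v \geq 12$ cannot work as written. You propose to build, for each such $v$, a simple TTS$(v)$ whose $2$-BIG is connected but possesses a cut vertex, and then argue non-Hamiltonicity from the failure of $2$-connectedness. But a connected $2$-BIG of a simple TTS \emph{never} has a cut vertex: in fact Colbourn and Johnstone showed that such graphs are $3$-connected. Even without that reference the weaker fact you need is easy to see. If $B_1=\{a,b,c\}$ and $B_2=\{a,b,d\}$ are adjacent in the $2$-BIG via the pair $\{a,b\}$, look at the link of $a$: the multigraph on $V\setminus\{a\}$ whose edges are the pairs $\{x,y\}$ with $\{a,x,y\}$ a block. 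This link is $2$-regular, and since the TTS is simple it is a disjoint union of simple cycles of length at least $3$. The edges $bc$ and $bd$ of the link share the vertex $b$, so $B_1$ and $B_2$ lie on a common link-cycle; traversing that cycle in the other direction yields a $B_1$--$B_2$ path in the $2$-BIG that avoids the edge $B_1B_2$. Hence the $2$-BIG is bridgeless, and a cubic bridgeless graph has no cut vertex. Your base cases ``with a cut vertex'' therefore cannot exist, and the recursive step you describe is trying to preserve a structure that is impossible to create in the first place.

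Because the $2$-BIGs in question are cubic and $3$-connected, exhibiting non-Hamiltonicity genuinely requires a global obstruction rather than a local connectivity defect; this is why the paper remarks that recognising Hamiltonicity in cubic $3$-connected graphs is NP-complete. The actual proof (which is not in this paper but in the companion paper it cites as~\cite{firstpaper}) proceeds by explicit constructions and recursions that embed a known non-Hamiltonian configuration while keeping the $2$-BIG connected; the certificate of non-Hamiltonicity is not a cut vertex. Your necessity outline (hand analysis for $v\in\{3,4\}$, exhaustive check for $v\in\{7,9,10\}$, Petersen for $v=6$) is reasonable, but the core of your sufficiency argument needs to be replaced entirely.
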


In this present paper we consider the question of which orders $v$ admit a TTS$(v)$ with a cyclic 2-intersecting Gray code.
Some previous work on this question was done by Dewar and Stevens;
the following result is an immediate consequence of Theorem~5.10 and Theorem~5.11 of~\cite{dewar}.

\begin{theorem} \label{dewar}
There exists a TTS($v$) with a Hamiltonian $2$-BIG if
\begin{itemize}
\item[(i)] $v \equiv 1$ or $4$ (modulo $12$) where $v \not\equiv 0$ (modulo $5$), or
\item[(ii)] $v \equiv 3$ or $7$ (modulo $12$) where $v \geq 7$.
\end{itemize}
\end{theorem}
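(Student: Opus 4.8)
The plan is to realise, for each residue class named in the statement, an explicit simple TTS$(v)$ on the point set $\mathbb{Z}_v$ whose $2$-BIG is Hamiltonian; since the result is quoted as a consequence of the two cited theorems of Dewar and Stevens, I would reconstruct the two underlying constructions (one for $v\equiv 1,4\pmod{12}$ and one for $v\equiv 3,7\pmod{12}$). Two structural facts guide the design. First, in a simple TTS the $2$-BIG is $3$-regular: each block $\{a,b,c\}$ lies in exactly one other block through each of its three pairs, and no second block can contain two of those pairs without equalling $\{a,b,c\}$, so every vertex has degree exactly three. Second, if the design is developed cyclically from a $(v,3,2)$ difference family, then $x\mapsto x+1$ is an automorphism of the $2$-BIG, giving it a $\mathbb{Z}_v$-symmetry that I can exploit to describe a Hamilton cycle orbit by orbit.

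The local engine is the observation that two translates $B+s$ and $B+s'$ of a base block $B=\{0,a,b\}$ are adjacent in the $2$-BIG exactly when $|B\cap(B+(s'-s))|=2$, and a nonzero step $d$ achieves this precisely when $B$ contains a three-term arithmetic progression with difference yielding $d$. I would therefore build the family (in the spirit of Skolem-type starters, which is what ties such constructions to congruences modulo $12$) so that its base blocks are arithmetic progressions $\{0,a_i,2a_i\}$; then stepping by $a_i$ threads the $v$ translates of the $i$-th block into a single circulant $v$-cycle $C_i$ in the $2$-BIG, provided $\gcd(a_i,v)=1$. This coprimality requirement is the natural source of a restriction such as $v\not\equiv 0\pmod 5$: it is exactly what prevents the step $a_i$ from splitting an orbit into several shorter cycles. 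For $v\equiv 1\pmod 3$ the full orbits account for all $t=(v-1)/3$ base blocks, whereas for $v\equiv 3\pmod{12}$ one short orbit from a base block $\{0,v/3,2v/3\}$ supplies the remaining blocks.

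With the disjoint orbit-cycles $C_1,\dots,C_t$ realised, the remaining task is to splice them into one Hamilton cycle. For each relevant pair of orbits I would locate a crossing configuration: blocks $B_i+s\in C_i$ and $B_{i+1}+s'\in C_{i+1}$ that meet in two points and whose cyclic successors along $C_i$ and $C_{i+1}$ also meet across the two orbits, so that deleting one edge from each of $C_i,C_{i+1}$ and inserting the two crossing edges (a single $2$-switch) merges them into one longer cycle. Carrying out such switches along a spanning tree of the auxiliary \emph{orbit graph} (one vertex per orbit, an edge wherever a legal switch exists) fuses all of $C_1,\dots,C_t$ into a single cycle on the $b=v(v-1)/3$ blocks.

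The hard part is controlling this merging globally. Individually valid $2$-switches need not combine into a single cycle — a careless collection of them can re-partition the blocks into several cycles — so the real work is to show that, for the chosen difference family, the crossing edges are both abundant and positioned so that the orbit graph is connected and a mutually compatible set of simultaneous switches exists. Establishing this uniformly across an entire residue class, rather than checking each order separately, is the crux of the argument; the cases excluded by the hypotheses (small $v$, the orders with $5\mid v$, and the low-order exceptions) are precisely those where either the requisite coprime starter fails or the merging cannot be arranged, and would be disposed of by direct verification.
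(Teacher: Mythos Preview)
The paper does not prove this theorem at all: it is introduced as ``an immediate consequence of Theorem~5.10 and Theorem~5.11 of~\cite{dewar}'' and used thereafter as a black box. There is thus no proof in the paper to compare your proposal against.

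Taken on its own terms, your proposal is a plausible outline of the kind of cyclic construction one expects behind such a result, and several of your structural observations are correct: the $2$-BIG of a simple TTS is $3$-regular; consecutive translates of an AP base block $\{0,a,2a\}$ share two points, so the orbit threads into a circulant $v$-cycle when $\gcd(a,v)=1$; and a coprimality obstruction is a credible source for the $5\nmid v$ restriction. But as written this is a programme, not a proof, and two load-bearing steps are missing. First, you never establish that a $(v,3,2)$ difference family whose base blocks are \emph{all} arithmetic progressions exists for the stated residue classes; this is a genuine combinatorial constraint (each block $\{0,a,2a\}$ contributes the difference multiset $\{\pm a,\pm a,\pm 2a\}$, and arranging these to cover $\mathbb{Z}_v\setminus\{0\}$ exactly twice is nontrivial), and invoking ``Skolem-type starters'' does not discharge it. Second, you explicitly identify the global splicing of the orbit-cycles as ``the crux of the argument'' and then supply none of it: you assert that the orbit graph is connected and that a compatible family of $2$-switches exists, but give no mechanism for producing either. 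Since these two steps are the entire content of the theorem, the proposal as it stands is a sketch of an approach rather than a proof.
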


Since twofold triple systems exist for all $v \equiv 0$ or 1 (modulo 3), many cases are left unresolved by Theorem~\ref{dewar}.
As our main result we completely determine the spectrum of TTSs with Hamiltonian 2-BIGs (i.e., TTSs with cyclic $2$-intersecting Gray codes) by constructing a TTS with a Hamiltonian $2$-BIG for all remaining orders for which such a design exists.
We also determine the spectrum for TTSs 
whose 2-BIGs have a Hamilton path. These results fully solve Problem 5.10 of \cite{dewar}.

In order to achieve these spectrum results, we first usefully manipulate a TTS($v$) with a Hamiltonian 2-BIG and then apply an assortment of techniques to embed the resulting design into a larger design. Then we take advantage of properties of some latin squares and their transversals and of some specific decompositions of certain complete graphs to exhibit a Hamilton cycle in the $2$-BIG of the larger design. At a first glance, the general techniques used in this paper to construct the relevant designs resemble those used in~\cite{firstpaper}.
However the methods that we now employ require additional care and intricacy in order to exhibit
Hamilton cycles in the $2$-BIGs of these delicately constructed designs. The difficulty of constructing TTSs with Hamiltonian $2$-BIGs is hinted at by the fact that the problem of deciding whether a cubic 3-connected graph admits a Hamiltonian cycle is NP-complete (see~\cite{garey}).


We conclude the Introduction with a brief outline of each section of this paper. In Section~\ref{Sec:Definitions} some relevant definitions are stated.
Section~\ref{Sec:$2v+1$ Construction} contains details of a  construction that,
when given a TTS($v$) with a cyclic 2-intersecting Gray code, yields a TTS($2v+1$) that also possesses a cyclic 2-intersecting Gray code.
In Section~\ref{Sec:ppc} we obtain good lower bounds on the size of a maximum partial parallel class in any TTS. These bounds on partial parallel classes are then used in Section~\ref{Sec:$2v+2$ Construction} to find parallel classes in some TTS($2v+1$) with a Hamiltonian $2$-BIG when $v \equiv 1, 4$ (modulo 6). The parallel classes then allow the transition from a $2v+1$ Construction to a $2v+2$ Construction. In Sections \ref{Sec:$3v$ Construction}-\ref{Sec:$3v+3$ Construction} details are given for constructions that take a TTS($v$) with a cyclic 2-intersecting Gray code and yield respectively a TTS($3v$), TTS($3v+1$) and TTS($3v+3$) that also have a cyclic 2-intersecting Gray code. Whereas the $3v$ Construction is valid for all $v$, the $3v+1$ and $3v+3$ Constructions require $v$ to be odd. 
Finally, in Section~\ref{Sec:MainResults} we build some TTSs of small orders that have Hamiltonian $2$-BIGs, and we use these examples together with Theorem \ref{dewar} and the constructions given in the earlier sections to inductively settle the complete spectra of TTSs with $2$-intersecting Gray codes as follows:

\begin{theorem}
Suppose that $v\geq3$. There exists a TTS($v$) with a Hamiltonian $2$-BIG if and only if $v \equiv 0,1$ (modulo $3$), except for $v \in \{3,6\}$.
\end{theorem}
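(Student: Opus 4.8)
The plan is to prove both implications, treating necessity quickly and then building sufficiency by strong induction driven by the five constructions advertised in the outline. For necessity, Bhattacharya's theorem already forces $v\equiv0,1\pmod{3}$ and $v\neq1$, so only $v\in\{3,6\}$ need to be excluded. A TTS$(3)$ consists of two copies of the unique triple on three points; these two blocks meet in all three points, so the $2$-BIG has two vertices and no edge, hence no Hamilton cycle. For $v=6$, a Hamiltonian $2$-BIG is connected, a TTS with a connected $2$-BIG is necessarily simple, and the unique simple TTS$(6)$ has the Petersen graph as its $2$-BIG (Mahmoodian), which is non-Hamiltonian. This settles the ``only if'' direction.

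For sufficiency I would induct on $v$ over the admissible orders $v\geq4$ with $v\neq6$, organizing the argument by the residue of $v$ modulo $6$ (the admissible residues being $0,1,3,4$). The inductive step expresses each target order as the image of a strictly smaller admissible order under one of the constructions: an order $v\equiv1\pmod{6}$ is written as $2v'+1$ with $v'=v/2-1/2\equiv0\pmod3$; an order $v\equiv3\pmod{6}$ as $2v'+1$ with $v'\equiv1\pmod3$; an even order $v\equiv4\pmod{6}$ as $2v'+2$ with $v'=(v-2)/2\equiv1,4\pmod{6}$ (alternatively as $3v'+1$ with $v'$ odd); and an even order $v\equiv0\pmod{6}$ as either $3v'$ with $v'$ even or $3(v'+1)$ with $v'$ odd, whichever yields an admissible $v'$. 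In every case $v'<v$, so the ingredient design is furnished by the induction hypothesis, and each construction returns a TTS$(v)$ whose $2$-BIG is again Hamiltonian.

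A short residue computation shows that this reduction always lands on an admissible $v'\notin\{3,6\}$ except in the cases $v=7$ and $v=13$ (where the natural reduction produces $v'=3$ and $v'=6$ respectively) and $v=18$ (where both even-$0\pmod6$ routes fail, giving $v'=6$ and the inadmissible $v'=5$), together with the genuine base order $v=4$. The orders $v=7$ and $v=13$ are supplied directly by Theorem~\ref{dewar}, and $v=4$ is immediate since the four distinct triples on a $4$-set form a simple TTS$(4)$ whose $2$-BIG is $K_4$. Thus it suffices to exhibit by hand (or by computer search) TTSs with cyclic $2$-intersecting Gray codes for the finite set $\{4,18\}$, which is done in Section~\ref{Sec:MainResults}; the induction then delivers all larger admissible orders.

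The hard part is not the bookkeeping but two structural demands internal to the constructions. First, the $2v+2$ Construction is available only when the intermediate TTS$(2v'+1)$ carries a suitable (partial) parallel class; guaranteeing this for $v'\equiv1,4\pmod6$ is precisely why one first proves lower bounds on maximum partial parallel classes in arbitrary TTSs (Section~\ref{Sec:ppc}) and then manufactures TTSs that simultaneously admit a cyclic $2$-intersecting Gray code and a parallel class. Second, for every construction the delicate step is not assembling the enlarged point set and block set but \emph{exhibiting} the Hamilton cycle in the resulting $2$-BIG: since such $2$-BIGs are cubic and $3$-connected and deciding Hamiltonicity of cubic $3$-connected graphs is NP-complete in general, each construction must explicitly splice the new blocks into the inherited Hamilton cycle using transversals of latin squares and decompositions of complete graphs, and I expect this threading to be the principal obstacle to overcome.
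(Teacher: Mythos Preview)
Your overall plan matches the paper's: necessity is exactly as you describe, and sufficiency is by strong induction fed by the five constructions together with a finite list of base cases. The paper organises the induction modulo~$12$ and leans on Theorem~\ref{dewar} for most of the residues $1,3,4,7$, falling back on the constructions only in the subcases where Dewar's result is unavailable; your mod-$6$ scheme instead routes almost everything through the recursive constructions, invoking Dewar only at $v\in\{7,13\}$. Either bookkeeping is fine in principle.

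There is, however, a genuine gap in your case analysis. Your ``short residue computation'' checks only that the ingredient order $v'$ is admissible and lies outside $\{3,6\}$, but each construction also carries a lower-bound hypothesis on $v'$ that you never verify. In particular, Theorem~\ref{2v+2} (the $2v+2$ Construction) is stated only for $v'=6k+1$ with $k\geq1$ or $v'=6k+4$ with $k\geq2$, so it does \emph{not} apply when $v'\in\{4,10\}$; and Theorem~\ref{3v+1} (the $3v+1$ Construction) requires $v'\geq7$ odd. Consequently $v=10$ slips through both of your routes for the residue $4\pmod6$: the primary route $2v'+2$ would need $v'=4$, and the alternative $3v'+1$ would need $v'=3$, neither of which is permitted. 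Likewise $v=22$ cannot be reached via $2v'+2$ (that would require $v'=10$, again outside the stated range), so you must explicitly invoke the $3v'+1$ alternative with $v'=7$ rather than leave it as a parenthetical option. The paper resolves this by including $v=10$ among the explicit small-order examples in Section~\ref{Sec:MainResults}; once you add $v=10$ to your base list and make the choice of route at $v=22$ precise, your induction goes through.
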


\begin{theorem}
Suppose that $v\geq3$. There exists a TTS($v$) with a $2$-BIG that has a Hamilton path if and only if $v \equiv 0,1$ (modulo $3$), except for $v=3$.
\end{theorem}

\section{Some Definitions}
\label{Sec:Definitions}

In this section we define some notions that will be used throughout the rest of this paper. These definitions can be found in~\cite{thebook} as well.


A \textit{latin square} of \textit{order} $v$ is a $v \times v$ array, each cell of which contains exactly one of the symbols in $\{0,1,\ldots,v-1\}$, with the property that each row and column of the array contains each symbol in $\{0,1,\ldots,v-1\}$ exactly once. A \textit{quasigroup} of \textit{order} $v$ is a pair $(Q, \circ)$, where $Q$ is a set of size $v$ and $\circ$ is a binary operation on $Q$ with the property that for every pair of elements $a,b\in Q$, the equations $a\circ x = b$ and $y \circ a = b$ have unique solutions. We can consider a quasigroup as a latin square with a headline and a sideline. A \textit{transversal} $T$ of a latin square of order $v$ on the symbols $\{0,1,\ldots,v-1\}$ is a set of $v$ cells, exactly one cell from each row and each column, such that each of the symbols in $\{0,1,\ldots,v-1\}$ occurs in exactly one cell of $T$. A \textit{parallel class} in a Steiner triple system with point set $S$ and triple set $\mathcal{T}$
is a subset of $\mathcal{T}$ that partitions $S$. 


\section{\boldmath $2v+1$ Construction}
\label{Sec:$2v+1$ Construction}
	
We provide a construction that generates an infinite family of TTSs with Hamiltonian $2$-BIGs from a given TTS with a Hamiltonian $2$-BIG. We first describe a certain decomposition of the twofold complete graph $2K_{t}$ ($t\geq3$) on the vertex set $\mathbb{Z}_{t-1} \cup \{ \infty \}$ into $t-1$ Hamilton cycles.

\medskip




\textbf{Decomposition}: First suppose that $t$ is even. For each $0\leq j \leq 2t-3$ define a $1$-factor $F_{j}$ of $2K_{t}$ as $F_{j} = \big\{ \{0+j, \infty \}, \{1+j, t-2+j\}, \{2+j, t-3+j \}, \ldots , \{t/2-1+j,t/2+j\} \big\} $ (additions are carried out modulo $t-1$). Note that $F_{k} = F_{\ell}$ if and only if $k \equiv \ell$ (modulo $t-1$), and that $F_{0}, F_{1}, \ldots , F_{2t-3}$ is a $1$-factorization of $2K_{t}$. It is easy to see that for each $0 \leq s \leq t-2$ the edges in the union of $F_{s}$ and $F_{s+1}$ form a Hamilton cycle in $2K_{t}$, call it $H_{s}$. Then $H_{0}, H_{1}, \ldots , H_{t-2}$ is a Hamilton cycle decomposition of $2K_{t}$. 

Now suppose that $t$ is odd. For each $0\leq j \leq (t-3)/2$ define a pair of Hamilton cycles $H_{j}$ and $H_{j}'$ of $2K_{t}$ as follows: $H_{j} = (\infty, j, 1+j, t-2+j, 2+j, t-3+j, \ldots , (t-3)/2+j, (t+1)/2+j, (t-1)/2+j, \infty)$ and $H_{j}' = (\infty, j, t-2+j, 1+j, t-3+j, 2+j, \ldots , (t+1)/2+j, (t-3)/2+j, (t-1)/2+j, \infty)$ (additions are carried out modulo $t-1$). It is easy to check that $H_{0}, H_{1}, \ldots , H_{(t-3)/2}$ and $H_{0}', H_{1}', \ldots , H_{(t-3)/2}'$ are each a Hamilton cycle decomposition of $K_{t}$ (hence $H_{0}, H_{1}, \ldots , H_{(t-3)/2}, H_{0}', H_{1}', \ldots , H_{(t-3)/2}'$ is a Hamilton cycle decomposition of $2K_{t}$). 




\medskip

Now we can state the $2v+1$ Construction.


\noindent\textbf{$\mathbf{2\textit{v}+1}$ Construction}: Let $\mathcal{E}=(V_{1}, \mathcal{B})$ be a TTS($v$) ($v\geq7$ if $v$ is odd, $v\geq 4$ if $v$ is even) on the point set $V_{1}=\{n_{0}, \ldots , n_{v-1}\}$ with a Hamiltonian \textnormal{$2$-BIG} such that $\{n_{3}, n_{4}, n_{5}\}$ is a triple if $v$ is odd, $\{n_{0}, n_{1}, n_{(v/2)+1}\}$ is a triple if $v$ is even. Let $\mathcal{H} = \{H_{0}, H_{1}, \ldots , H_{v-1}\}$ be the Hamilton cycle decomposition of $2K_{v+1}$ as given above (on the vertex set $V_{2}=\mathbb{Z}_{v}\cup \{\infty\}$ where $V_1 \cap V_2 = \emptyset$), for which we define $H_{(v/2)+i}$ to be $H'_{i}$ ($0\leq i \leq (v-2)/2$) when $v$ is even.
Form $\mathcal{E}'=(V', \mathcal{B}')$ where $V'= V_{1} \cup V_{2}$, and define $\mathcal{B}'$ as follows:

\begin{itemize}
\item[$\bullet$] Type 1 triples: Let $\mathcal{B}'$ contain all triples in $\mathcal{B}$, except for $\{n_{3}, n_{4}, n_{5}\}$ if $v$ is odd, $\{n_{0}, n_{1}, n_{(v/2)+1}\}$ if $v$ is even.
\item[$\bullet$] Type 2 triples: For each $0 \leq s \leq v-1$ and for each edge $\{i,j\}$ ($i,j \in V_{2}$) of $H_{s}$, let $\mathcal{B}'$ contain $\{n_{s},i,j\}$ except for the three triples $\{n_{3},2,4\}$, $\{n_{4},2,6\}$, $\{n_{5},4,6\}$ if $v$ is odd, and the three triples $\{n_{0},0,1\}$, $\{n_{1},1,2\}$, $\{n_{(v/2)+1},0,2\}$ if $v$ is even. 
\item[$\bullet$] Type 3 triples: Let $\mathcal{B}'$ contain the three triples $\{2, n_{3}, n_{4}\}$, $\{4, n_{3}, n_{5} \}$ and $\{6, n_{4}, n_{5}\}$ if $v$ is odd, $\{0, n_{0}, n_{(v/2)+1}\}$, $\{1, n_{0}, n_{1} \}$ and $\{2, n_{(v/2)+1}, n_{1}\}$ if $v$ is even.
\item[$\bullet$] Type 4 triples: Let $\mathcal{B}'$ contain the triple $\{2,4,6 \}$ if $v$ is odd, $\{0,1,2\}$ if $v$ is even.
\end{itemize}

Before we state and prove the theorem, we first make the following useful observations about the Type 2 triples.

\noindent\textbf{Remarks}: Consider the $2$-BIG obtained from the Type 2 triples plus the three excluded triples, call it $G$.
$G$ is Hamiltonian, since a Hamilton cycle is given by the following procedure:

Suppose $v$ is odd. Start (step $k=0$) at $\{n_{0}, 0, \infty \}$ 
Set $a_{0}=0$ and $a_{1} = \infty$. Until all triples with $n_{0}$ are visited, at each step $k$ visit $\{n_{0}, a_{k}, a_{k+1}\}$, where $\{a_{k}, a_{k+1}\}$ is the edge in $F_{k}$ (subscript in $F_{k}$ being calculated modulo $2$) that is incident with $a_{k}$. The last triple with $n_{0}$ will be $\{n_{0},2,0\}$. Note that $\{2,0\} \in F_{1} \cap E(H_{0}) \cap E(H_{1})$. Therefore $\{n_{1}, 0,2\} $ is a Type $2$ triple, and moreover in $G$ it is adjacent to $\{n_{0},2,0\}$. Using similar arguments we see that all vertices in $V(G)$ 
are connected through a Hamiltonian path. It is easy to observe that using the above procedure the last triple that will be visited is $\{n_{v-1}, \infty,0\}$ where $\{\infty,0\} \in F_{0} \cap E(H_{v-1}) \cap E(H_{0})$. Moreover $\{n_{0}, \infty, 0\} $ is a Type $2$ triple, and
in $G$ $\{n_{v-1},\infty,0\}$ is adjacent to $\{n_{0}, 0, \infty \}$, hence we have a Hamilton cycle.

We note that excluding the three triples $\{n_{3},2,4\}$, $\{n_{4},2,6\}$, $\{n_{5},4,6\}$ breaks this Hamilton cycle into three paths $P_{1}$, $P_{2}$ and $P_{3}$ where $P_{1}$ has endpoints $\{n_{3},4,\infty\}$ and $\{n_{4},8,2\}$, $P_{2}$ has endpoints $\{n_{4},6,4\}$ and $\{n_{5},8,4\}$, and $P_{3}$ has endpoints $\{n_{5},6,\infty\}$ and $\{n_{3},6,2\}$ (if $v=7$, then $8$ is replaced with $1$).

\medskip

Suppose $v$ is even. Start at $\{n_{v/2}, (v-2)/2, v/2 \}$ 
and follow the triples of the form $\{n_{v/2},x,y\}$ until all triples with $n_{v/2}$ are visited where the pairs of the form $\{x,y\} $ appear as edges in consecutive order in the Hamilton cycle $H'_{0}$, so that the next triple visited is $\{n_{v/2}, (v+2)/2, (v-2)/2\}$ and the last triple including $n_{v/2}$ is $\{n_{v/2}, v/2, \infty\}$. Note that $\{v/2, \infty\} \in E(H'_{0}) \cap E(H_{0})$. So $\{n_{0}, v/2, \infty\}$ is a Type 2 triple. Visit $\{n_{0}, v/2, \infty\}$ next and follow the triples of the form $\{n_{0}, x, y\}$ until all triples with $n_{0}$ are visited, where the pairs $\{x, y\} $ appear in consecutive order in the Hamilton cycle $H_{0}$ so that the next triple visited is $\{n_{0}, \infty, 0\}$ and the last triple including $n_{0}$ is $\{n_{0}, (v+2)/2, v/2\}$. Continue to visit triples by following the edges from the cycles $H_1', H_1, H_2', H_2, \ldots, H_{(v-2)/2}', H_{(v-2)/2}$, so that the edges of $H_i'$ ($i = 1, \ldots, (v-2)/2$) are followed in a consecutive order starting at $\{ v/2, v/2 + 2i-1 \}$ and ending with $\{ v/2, v/2 + 2i \}$ and so that the edges of $H_i$ ($i = 1, \ldots, (v-2)/2$) are followed in a consecutive order starting at $\{ v/2, v/2 +2i \}$ and ending with $\{ v/2, v/2 + 2i+1 \}$
(with arithmetic being done modulo $v$). By combining the edges of $H_i'$ (respectively $H_{i}$) with the point $n_{(v/2)+1}$ (respectively $n_{i}$), this process results in a Hamilton path in $G$. Moreover, the last triple of this Hamilton path is $\{n_{(v/2)-1}, (v-2)/2, v/2 \}$, which is adjacent to $\{n_{v/2}, (v-2)/2, v/2 \}$ in $G$, hence we have a Hamilton cycle.




We note that excluding the three triples $\{n_{0},0,1\}$, $\{n_{(v/2)+1},0,2\}$, $\{n_{1},1,2\}$ breaks this Hamilton cycle into three paths $P_{1}$, $P_{2}$ and $P_{3}$ where $P_{1}$ has endpoints $\{n_{0},1,v-1\}$ and $\{n_{(v/2)+1},1,0\}$, $P_{2}$ has endpoints $\{n_{(v/2)+1},2,v-1\}$ and $\{n_{1},\infty,1\}$, and $P_{3}$ has endpoints $\{n_{1},2,0\}$ and $\{n_{0},\infty,0\}$.


In what follows, the direction in which a path is traversed is tacitly implied by the specified neighbours of the endvertices of the path. 

\begin{theorem} \label{2v+1}
If there exists a \textnormal{TTS($v$)} ($v\geq7$ if $v$ is odd, $v\geq 4$ if $v$ is even) with a Hamiltonian \textnormal{$2$-BIG}, then there exists a \textnormal{TTS($2v+1$)} with a Hamiltonian \textnormal{$2$-BIG}.
\end{theorem}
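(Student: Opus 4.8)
The plan is to prove the theorem by first checking that the construction actually yields a TTS($2v+1$) and then exhibiting one explicit Hamilton cycle in its $2$-BIG, assembled from three prefabricated pieces that are stitched together using the Type~3 and Type~4 triples as connectors.

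For the counting, note $|V'|=v+(v+1)=2v+1$, so I would verify that every pair lies in exactly two triples of $\mathcal{B}'$ by splitting into three cases. Pairs inside $V_1$ are covered twice by $\mathcal{B}$; deleting the single Type~1 exception costs each of its three pairs one occurrence, and the three Type~3 triples restore exactly those pairs. Pairs inside $V_2$ are covered twice by the Type~2 triples, because $\mathcal{H}$ is a Hamilton decomposition of $2K_{v+1}$, so each such pair is an edge of exactly two of the cycles $H_s$ and hence lies in two distinct Type~2 triples; the three omitted Type~2 triples remove $\{2,4\},\{2,6\},\{4,6\}$ once each, and the Type~4 triple $\{2,4,6\}$ restores them. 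Finally a cross pair $\{n_s,i\}$ is covered by the two edges of $H_s$ incident with $i$, and for the six affected pairs the omission of a Type~2 triple is compensated by a Type~3 triple. This bookkeeping is routine.

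For Hamiltonicity I would build the cycle from: (a) the Hamilton path $\Pi$ on the Type~1 triples obtained by deleting the vertex $R=\{n_3,n_4,n_5\}$ from the given Hamiltonian $2$-BIG of $\mathcal{E}$; (b) the three paths $P_1,P_2,P_3$ on the Type~2 triples supplied by the Remarks; and (c) the four connectors, namely the three Type~3 triples and the Type~4 triple $Q=\{2,4,6\}$. The crucial device is a labeling trick: the two neighbours of $R$ on the given cycle share two of the pairs of $R$, and since a relabeling of $\mathcal{E}$ preserves Hamiltonicity of its $2$-BIG, I am free to name the points of $R$ so that their common point is $n_4$. Then the endpoints of $\Pi$ are precisely the unique triples $X_{34}$ and $X_{45}$ containing $\{n_3,n_4\}$ and $\{n_4,n_5\}$, while the triple $X_{35}$ containing $\{n_3,n_5\}$ stays interior to $\Pi$. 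With this normalisation every required adjacency is present: $X_{34}$ is adjacent to $\{2,n_3,n_4\}$ and $X_{45}$ to $\{6,n_4,n_5\}$; each Type~3 triple meets two $P_i$-endpoints (for instance $\{4,n_3,n_5\}$ meets $\{n_3,4,\infty\}$ and $\{n_5,8,4\}$); and $Q$ meets the endpoints $\{n_4,6,4\}$ and $\{n_3,6,2\}$. Reading these off gives the single cycle
\begin{multline*}
\{n_4,6,4\}\to Q\to\{n_3,6,2\}\to P_3\to\{n_5,6,\infty\}\to\{6,n_4,n_5\}\to X_{45}\to \Pi \\
\to X_{34}\to\{2,n_3,n_4\}\to\{n_4,8,2\}\to P_1\to\{n_3,4,\infty\}\to\{4,n_3,n_5\}\to\{n_5,8,4\}\to P_2\to\{n_4,6,4\},
\end{multline*}
in which each $P_i$ (resp.\ $\Pi$) is traversed between its two endpoints; one then checks that every Type~1, Type~2, Type~3 and Type~4 triple is visited exactly once. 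The even case runs by the identical scheme, using the analogous paths and endpoints recorded in the Remarks (there $Q=\{0,1,2\}$ is adjacent to the two $P_i$-endpoints $\{n_{(v/2)+1},1,0\}$ and $\{n_1,2,0\}$).

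The main obstacle is precisely this gluing step, where two things need care. First, a priori we do not control which two of $X_{34},X_{35},X_{45}$ are the cycle-neighbours of $R$; the labeling trick that sends the middle point to $n_4$ is what removes this dependence, and without it one meets a genuine case split in which $Q$ would have to be spliced at an interior Type~2 vertex instead of supplying the closure. Second, one must confirm that the concatenation is a single spanning cycle rather than a union of shorter cycles; this is where the precise endpoints of $P_1,P_2,P_3$ from the Remarks, together with the fact that $Q$ joins two \emph{path-endpoints}, are indispensable, since several other seemingly valid connector assignments close up prematurely into a short cycle that misses $\Pi$ or $P_2$.
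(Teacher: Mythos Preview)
Your proposal is correct and follows essentially the same approach as the paper. The Hamilton cycle you write down for the odd case is exactly the paper's cycle $(P,\{6,n_4,n_5\},P_3,\{2,4,6\},P_2,\{4,n_3,n_5\},P_1,\{2,n_3,n_4\})$ traversed in reverse (with the path endpoints made explicit), and your labeling trick that sends the shared point of the two cycle-neighbours of $R$ to $n_4$ is precisely the paper's instruction to ``permute the labels $n_3,n_4,n_5$''; the added verification that $\mathcal{E}'$ is a TTS$(2v+1)$ is routine bookkeeping that the paper omits.
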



\begin{proof} First suppose that $v$ is odd. Suppose that $\mathcal{E}=(V_{1}, \mathcal{B})$ is a TTS($v$) on the point set $V_{1}=\{n_{0}, \ldots , n_{v-1}\}$ with a Hamiltonian $2$-BIG such that $\{n_{3}, n_{4}, n_{5}\}$ is a triple. Deleting $\{n_{3}, n_{4}, n_{5}\}$ yields a Hamilton path $P$ in the $2$-BIG where both triples on the ends of the Hamilton path include a different pair chosen from the set $\{n_{3}, n_{4}, n_{5}\}$ and one other point. If necessary, permute the labels $n_{3}, n_{4}, n_{5}$ such that $P$ has ends $\{n_{3}, n_{4}, n_{i} \}$ and $\{n_{4}, n_{5}, n_{j}\}$ for some $0 \leq i,j \leq v-1$, $i,j \notin \{3,4,5\}$. Apply the $2v+1$ Construction. Then a Hamilton cycle in the $2$-BIG of the TTS($2v+1$) is given as follows: 
$( P, \{6,n_{4},n_{5}\}, P_{3},\{2,4,6\}, P_{2}, \{4,n_{3},n_{5}\}, P_{1}, \{2,n_{3},n_{4}\})$.

Now suppose that $v$ is even. Suppose that $\mathcal{E}=(V_{1}, \mathcal{B})$ is a TTS($v$) on the point set $V_{1}=\{n_{0}, \ldots , n_{v-1}\}$ with a Hamiltonian $2$-BIG such that $\{n_{0}, n_{1}, \linebreak n_{(v/2)+1}\}$ is a triple. Deleting $\{n_{0}, n_{1}, n_{(v/2)+1}\}$ yields a Hamilton path $P$ in the $2$-BIG where both triples on the ends of the Hamilton path include a different pair chosen from the set $\{n_{0}, n_{1}, n_{(v/2)+1}\}$ and one other point. If necessary, permute the labels $n_{0}, n_{1}, n_{(v/2)+1}$ such that $P$ has ends $\{n_{0}, n_{(v/2)+1}, n_{i} \}$ and $\{n_{(v/2)+1}, n_{1}, n_{j}\}$ for some $0 \leq i,j \leq v-1$, $i,j \notin \{0,1,(v/2)+1\}$. Apply the $2v+1$ Construction. Then a Hamilton cycle in the $2$-BIG of the TTS($2v+1$) is given as follows: $(P, \{2,n_{(v/2)+1},n_{1}\}, P_{2},\{1,n_{0},n_{1}\}, P_{1}, \{0,1,2\}, P_{3}, \{0,n_{0},n_{(v/2)+1}\})$.
\end{proof}

\section{Large Partial Parallel Classes in TTSs}
\label{Sec:ppc}
We take an approach similar to \cite{lindnerphelps} to obtain a good lower bound on the number of triples in a maximum partial parallel class in a TTS. These three facts are easy to prove (and can be found in \cite{lindnerphelps}).

\begin{itemize}
\item[(1)] If $(S,\mathcal{T})$ is an $S(k,k+1,v)$ Steiner system, then $|\mathcal{T}|=\binom{v}{k}/(k+1)$.
\item[(2)] A partial $S(k,k+1,v)$ Steiner system is a pair $(S,\mathcal{P})$ where $S$ is a $v$-set and $\mathcal{P}$ is a collection of $(k+1)$-element subsets of $S$ such that every $k$-element subset of $S$ belongs to at most one block of $\mathcal{P}$. As a consequence, if $(S,\mathcal{P})$ is a partial $S(k,k+1,v)$ Steiner system, then $|\mathcal{P}|\leq \binom{v}{k}/(k+1)$.
\item[(3)] If $(S,\mathcal{T})$ is a (partial) $S(k,k+1,v)$ Steiner system, then the number of blocks having at least one element in common with a given subset of $k+1$ elements is at most $(k+1)\left[\binom{v-1}{k-1}/k \right]$.
\end{itemize}

Lindner and Phelps \cite{lindnerphelps} used these facts to establish a lower bound on the number of blocks in a maximum partial parallel class in a $S(k,k+1,v)$ Steiner system. Similar lower bounds can be found in \cite{brouwer, gionfriddo, lofaro, mullin, tuza, woolbright}. In the setting of (partial) twofold systems, we have:
\begin{itemize}
\item[($1'$)] If $(S,\mathcal{T})$ is a twofold $S(k,k+1,v)$ system, then $|\mathcal{T}|=2\binom{v}{k}/(k+1)$.
\item[($2'$)] A partial twofold $S(k,k+1,v)$ system is a pair $(S,\mathcal{P})$ where $S$ is a $v$-set and $\mathcal{P}$ is a collection of $(k+1)$-element subsets of $S$ such that every $k$-element subset of $S$ belongs to at most two blocks of $\mathcal{P}$. As a consequence, if $(S,\mathcal{P})$ is a partial twofold $S(k,k+1,v)$ system, then $|\mathcal{P}|\leq 2\binom{v}{k}/(k+1)$.
\item[($3'$)] If $(S,\mathcal{T})$ is a (partial) twofold $S(k,k+1,v)$ system, then the number of blocks having at least one element in common with a given subset of $k+1$ elements is at most $2(k+1)\left[\binom{v-1}{k-1}/k \right]$.

\end{itemize}

Let $\pi$ be a partial parallel class of maximum size in a twofold triple system $(S,\mathcal{T})$ and let $P$ denote the set of points belonging to the triples in $\pi$. Since $\pi$ is a partial parallel class of maximum size every $2$-element subset of $S \setminus P$ 
belongs to two triples of $\mathcal{T}$ each of which intersects $P$ in exactly one point. Denote by $X$ the set of all such intersection points. For each $x \in X$, set $\mathcal{T}(x)= \{B\setminus \{x\}: B\in \mathcal{T}, B \setminus \{x\} \subseteq S\setminus P\}$. Then each $(S \setminus P, \mathcal{T}(x))$ is a partial twofold $S(1,2,v-3|\pi|)$ system, $|\mathcal{T}(x)| \leq 2\binom{v-3|\pi|}{1}/2$ (by ($2'$)), and $\{\mathcal{T}(x)\}_{x\in X}$ is a partition of the $2$-fold multiset of all $2$-element subsets of $S\setminus P$. 

Observe that if $B$ is a triple in $\pi$ containing at least $2$ points of $X$, then for each $x \in X\cap B$ we must have $|\mathcal{T}(x)| \leq 4$. This is because otherwise we can let $y$ be any other point belonging to $X \cap B$ and $D_{1}$ be any $2$-element subset in $\mathcal{T}(y)$, and then (by ($3'$)) at most $4$
of the $2$-element subsets in $\mathcal{T}(x)$ can intersect $D_{1}$, and therefore $\mathcal{T}(x)$ must contain a $2$-element subset $D_{2}$ which is disjoint from $D_{1}$. This results in $(\pi\setminus B) \cup (D_{1}\cup y) \cup (D_{2}\cup x)$ being a partial parallel class in $(S,\mathcal{T})$ larger than $\pi$. Contradiction.

It follows that for each triple $B$ in $\pi$ containing at least $2$ points of $X$, \[\sum_{x\in X\cap B} |\mathcal{T}(x)| \leq 12\binom{v-3|\pi|-1}{0}/1.\] If we let $a$ denote the number of triples in $\pi$ containing at least $2$ points of $X$, then $|\pi|-a$ denotes the number of triples in $\pi$ containing at most $1$ point of $X$. Therefore, 
we get \[2\binom{v-3|\pi|}{2} = \left( \sum_{x\in X} |\mathcal{T}(x)|\right) \] \[\leq a\left[12\binom{v-3|\pi|-1}{0}/1 \right] + (|\pi|-a)\left[2\binom{v-3|\pi|}{1}/2 \right]. \]

There are two cases to consider:

\[(I) \hspace{0.2cm} \left[12\binom{v-3|\pi|-1}{0}/1 \right] \leq \left[2\binom{v-3|\pi|}{1}/2 \right]\]
(which is true if and only if $v \geq 12+3|\pi|$).

Then,

\[2\binom{v-3|\pi|}{2} \leq |\pi|\left[2\binom{v-3|\pi|}{1}/2 \right] .\]

This inequality then yields the following lower bound for $|\pi|$:

\[\dfrac{-\sqrt{v^{2}+6v+9}}{24} + \dfrac{7v-3}{24} \leq |\pi|\]

\[\dfrac{v-1}{4}\leq |\pi|\]

We note that in this case taking into account that $|\pi|$ has to be an integer, this inequality implies that $|\pi| \geq (v+8)/6$ when $v \geq 10$ is equivalent to $4$ modulo $6$; and $|\pi| \geq (v+5)/6$ when $v \geq 7$ is equivalent to $1$ modulo $6$.

\vspace{1cm}

\[(II) \hspace{0.2cm} \left[12\binom{v-3|\pi|-1}{0}/1 \right] > \left[2\binom{v-3|\pi|}{1}/2 \right]\] (which is true if and only if $v < 12+3|\pi|$).

Then,

\[2\binom{v-3|\pi|}{2} < |\pi|\left[12\binom{v-3|\pi|-1}{0}/1 \right].\]

This inequality then yields the following lower bound for $|\pi|$:

\[\dfrac{2v+3}{6} - \dfrac{\sqrt{16v+9}}{6} < |\pi|\]

We note that in this case taking into account that $|\pi|$ has to be an integer, this inequality implies that $|\pi| \geq (v+8)/6$ when $v \geq 16$ is equivalent to $4$ modulo $6$; and $|\pi| \geq (v+5)/6$ when $v \geq 7$ is equivalent to $1$ modulo $6$.

The next theorem summarizes the results in the two cases above.

\begin{theorem}
\label{lowerbound}
Any \textnormal{TTS($v$)} has a partial parallel class with at least $min\{\lceil (v-1)/4\rceil, \lceil (2v+3-\sqrt{16v+9})/6 \rceil \}$ triples. In particular, any \textnormal{TTS($v$)} has a partial parallel class with at least $(v+8)/6$ triples when $v \geq 16$ is equivalent to $4$ modulo $6$; and with at least $(v+5)/6$ triples when $v \geq 7$ is equivalent to $1$ modulo $6$.

\end{theorem}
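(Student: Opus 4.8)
The plan is to take a partial parallel class $\pi$ of \emph{maximum} size and to exploit that maximality through a local augmentation (``swap'') argument of the Lindner--Phelps type. Writing $P$ for the set of points covered by $\pi$ and abbreviating $w = v - 3|\pi|$ for the number of uncovered points, the first step is to record what maximality buys us: every $2$-subset of $S \setminus P$ must be completed into a triple only by a point of $P$, since a triple lying entirely within $S \setminus P$ could otherwise be adjoined to $\pi$. This is exactly what licenses the partition $\{\mathcal{T}(x)\}_{x \in X}$ of the twofold multiset of $2$-subsets of $S \setminus P$, so that $\sum_{x \in X} |\mathcal{T}(x)| = 2\binom{w}{2}$. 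The whole problem is thereby reduced to bounding the individual terms $|\mathcal{T}(x)|$ and aggregating them against the number of triples in $\pi$.

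The crucial step, and the one I expect to be the main obstacle, is the observation that whenever a triple $B \in \pi$ contains two points $x, y \in X$, each of $\mathcal{T}(x)$ and $\mathcal{T}(y)$ is forced to be small, namely of size at most $4$. The mechanism is a two-for-one swap: if $|\mathcal{T}(x)| \geq 5$, then fixing any pair $D_1 \in \mathcal{T}(y)$ and invoking fact $(3')$ with $k=1$ (which says at most $4$ of the pairs in $\mathcal{T}(x)$ can meet a fixed pair) produces a $D_2 \in \mathcal{T}(x)$ disjoint from $D_1$; deleting $B$ and inserting the two triples $D_1 \cup \{y\}$ and $D_2 \cup \{x\}$, which together cover six distinct points, yields a partial parallel class strictly larger than $\pi$, a contradiction. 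The delicate part is getting this augmentation exactly right, i.e.\ verifying that the six covered points are genuinely distinct and that the two inserted triples really belong to $\mathcal{T}$; once this lemma is secured, everything downstream is bookkeeping.

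With the swap lemma in hand I would split $\pi$ into the $a$ triples meeting $X$ in at least two points and the remaining $|\pi| - a$ triples. The former contribute at most $12$ apiece to $\sum_{x} |\mathcal{T}(x)|$ (at most three points of $X$, each with $|\mathcal{T}(x)| \leq 4$), while the latter contribute at most $w$ apiece by fact $(2')$, giving the master inequality $2\binom{w}{2} \leq 12a + (|\pi| - a)w$. The final move is a case split on the sign of $12 - w$. When $w \geq 12$ the term $12a$ is dominated by $wa$, the bound collapses to $2\binom{w}{2} \leq |\pi|\, w$, and cancelling $w$ (equivalently, solving the quadratic $12|\pi|^2 - (7v-3)|\pi| + (v^2 - v) \leq 0$, whose discriminant is the perfect square $(v+3)^2$) gives $|\pi| \geq (v-1)/4$. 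When $w < 12$ the term $(|\pi|-a)w$ is dominated by $12(|\pi|-a)$, yielding $2\binom{w}{2} < 12|\pi|$, a quadratic in $|\pi|$ whose relevant root is $(2v+3-\sqrt{16v+9})/6$. Since the actual maximum class must fall into one of the two cases, $|\pi|$ is at least the minimum of the two rounded-up bounds, which is the stated estimate; the ``in particular'' clauses then follow by substituting $v \equiv 1$ or $4 \pmod 6$ and checking that the ceiling of the governing bound equals $(v+5)/6$ or $(v+8)/6$ in the indicated ranges.
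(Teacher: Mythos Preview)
Your proposal is correct and follows essentially the same argument as the paper: the Lindner--Phelps swap lemma adapted to the twofold setting (using fact $(3')$ to bound $|\mathcal{T}(x)| \leq 4$ whenever $x$ shares a triple of $\pi$ with another point of $X$), the master inequality $2\binom{w}{2} \leq 12a + (|\pi|-a)w$, and the same case split on whether $w = v - 3|\pi|$ is at least or below $12$, yielding the two quadratic bounds whose minimum gives the theorem.
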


Finally we note that the work in this section can be generalized without too much difficulty in order to find in a TTS many partial parallel classes of large size.
\iffalse

\bigskip

Consider the three partial parallel classes of $(S,\mathcal{T})$: $\pi_{0} \cup B_{0}, \pi_{1} \cup B_{1}$ and $\pi_{2} \cup B_{2}$ where $B_{0}=\{x,y,a\}, B_{1}=\{x,z,b\}$ and $B_{2}=\{y,z,c\}$. Each of these partial parallel classes have size at least $\lfloor v/6 \rfloor$ whenever $v\geq37$. Now we slightly modify the above arguments to find more partial parallel classes in $(S,\mathcal{T})$ that are of size at least $\lfloor v/6 \rfloor$ whenever $v$ is sufficiently large.

Let $(S,\mathcal{T}_{n-1})$ ($n \geq 4$) be a partial twofold triple system that is obtained by removing the triples in $\pi_{0} \cup B_{0}, \pi_{1} \cup B_{1}, \pi_{2} \cup B_{2}$ and in any $n-4$ additional maximum partial parallel classes $\pi_{3},\pi_{4}, \ldots, \pi_{n-2}$ from the twofold triple system $(S,\mathcal{T}_{0})$; obviously $|\pi_{i}| \leq v/3$ for any partial parallel class. Form $(S,\mathcal{T}_{n})$ ($n \geq 4$) by deleting the triples in a maximum partial parallel class $\pi_{n-1}$ of $(S,\mathcal{T}_{n-1})$. Let $\pi_{n}$ be a partial parallel class of maximum size in $(S,\mathcal{T}_{n})$ and let $P_{n}$ denote the set of points belonging to the triples in $\pi_{n}$. Since $\pi_{n}$ is a partial parallel class of maximum size every $2$-element subset of $S \setminus P_{n}$, except for the at most $3n(v/3)$ $2$-element subsets in $(\pi_{0} \cup B_{0}) \cup (\pi_{1} \cup B_{1}) \cup (\pi_{2} \cup B_{2}) \cup \pi_{3} \cup \ldots \cup \pi_{n-1}$, belongs to two triples of $\mathcal{T}_{n}$ each of which intersects $P_{n}$ in exactly one point. Denote by $X_{n}$ the set of all such intersection points. For each $x \in X_{n}$, set $\mathcal{T}_{n}(x)= \{B\setminus \{x\}: B\in \mathcal{T}_{n}, B \setminus \{x\} \subseteq S\setminus P_{n}\}$. Then each $(S \setminus P_{n}, \mathcal{T}_{n}(x))$ is a partial twofold $S(1,2,v-3|\pi_{n}|)$ system, $|\mathcal{T}_{n}(x)| \leq 2\binom{v-3|\pi_{n}|}{1}/2$ [by ($2'$)], and $\{\mathcal{T}_{n}(x)\}_{x\in X_{n}}$ is a partition of the $2$-fold multiset of all $2$-element subsets of $S\setminus P_{n}$ other than the at most $nv$ $2$-element subsets in $\big( (\pi_{0} \cup B_{0})\cup (\pi_{1} \cup B_{1}) \cup (\pi_{2} \cup B_{2})  \cup \pi_{3} \cup \ldots \cup \pi_{n-1}\big)\cap (S\setminus P_{n})$.

Similar to the claim ($\ast$), if $B$ is a triple in $\pi_{n}$ containing at least $2$ points of $X_{n}$, then for each $x \in X_{n}\cap B$ we must have $|\mathcal{T}_{n}(x)| \leq 4$.

It follows that for each triple $B$ in $\pi_{n}$ containing at least $2$ points of $X_{n}$, \[\sum_{x\in X_{n}\cap B} |\mathcal{T}_{n}(x)| \leq 12\binom{v-3|\pi_{n}|-1}{0}/1.\] If we let $a_{n}$ denote the number of triples in $\pi_{n}$ containing at least $2$ points of $X_{n}$, then $|\pi_{n}|-a_{n}$ denotes the number of triples in $\pi_{n}$ containing at most $1$ point of $X_{n}$. Therefore, we get\[2\binom{v-3|\pi_{n}|}{2} \leq \sum_{x\in X_{n}} |\mathcal{T}_{n}(x)| + nv\] \[\leq a_{n}\left[12\binom{v-3|\pi_{n}|-1}{0}/1 \right] + (|\pi_{n}|-a_{n})\left[2\binom{v-3|\pi_{n}|}{1}/2 \right] + nv.\]

There are two cases to consider:

\[(I) \hspace{0.2cm} \left[12\binom{v-3|\pi_{n}|-1}{0}/1 \right] \leq \left[2\binom{v-3|\pi_{n}|}{1}/2 \right]\] (which is true if and only if $v \geq 3|\pi_{n}|+12$).

Then,

\[2\binom{v-3|\pi_{n}|}{2} \leq |\pi_{n}|\left[2\binom{v-3|\pi_{n}|}{1}/2 \right] + nv.\]

This inequality then yields the following lower bound for $|\pi_{n}|$:

\[\dfrac{-\sqrt{v^{2}+48nv+6v+9}}{24} + \dfrac{7v-3}{24} \leq |\pi_{n}|\]

Suppose that $n\leq20$. Then,

\[\dfrac{-\sqrt{v^{2}+966v+9}}{24} + \dfrac{7v-3}{24} \leq |\pi_{n}|.\]

Taking into account that $|\pi_{n}|$ has to be an integer, some arithmetic shows that for the admissible orders $v \in \{94,99,100\}$ and for $v\geq 103$, this implies that $\lfloor v/6 \rfloor \leq |\pi_{n}|$.

\vspace{1cm}

\[(II) \hspace{0.2cm} \left[12\binom{v-3|\pi_{n}|-1}{0}/1 \right] > \left[2\binom{v-3|\pi_{n}|}{1}/2 \right]\] (which is true if and only if $v < 3|\pi_{n}|+12$).

Then,

\[2\binom{v-3|\pi_{n}|}{2} < |\pi_{n}|\left[12\binom{v-3|\pi_{n}|-1}{0}/1 \right] + nv.\]

This inequality then yields the following lower bound for $|\pi_{n}|$:

\[\dfrac{-\sqrt{(4n+16)v+9}}{6} + \dfrac{2v+3}{6} < |\pi_{n}|\]

Suppose that $n\leq20$. Then,

\[\dfrac{2v+3}{6} - \dfrac{\sqrt{96v+9}}{6} < |\pi_{n}|.\]

Taking into account that $|\pi_{n}|$ has to be an integer, some arithmetic shows that with $v \geq 74$, this implies that $\lfloor v/6 \rfloor \leq |\pi_{n}|$.

\bigskip

We conclude that whenever $v\in \{94,99,100\}$ or $v\geq 103$, in any twofold triple system of order $v$ we can find at least 21 partial parallel classes $\pi_{0}, \pi_{1}, \ldots, \pi_{20}$ each of size at least $\lfloor v/6 \rfloor$, such that $\{x,y,a\} \in \pi_{0}$, $\{x,z,b\} \in \pi_{1}$ and $\{y,z,c\}\in \pi_{2}$ (where $x,y,z,a,b,c$ are all distinct). [[These partial parallel classes consist of disjoint sets of triples.]]

Let $v$ be odd and $\mathcal{E}=(V_{1}, \mathcal{B})$ be a TTS($v$) ($v\geq7$) 
on the point set $V_{1}=\{n_{0}, \ldots , n_{v-1}\}$ with a Hamiltonian \textnormal{$2$-BIG} such that $\{n_{3}, n_{4}, n_{5}\}$ is a triple. 
Let $\mathcal{H} = \{H_{0}, H_{1}, \ldots , H_{v-1}\}$ be the Hamilton cycle decomposition of $2K_{v+1}$ as given in Section \ref{Sec:$2v+1$ Construction} (on the vertex set $V_{2}=\mathbb{Z}_{v}\cup \{\infty\}$ where $V_1 \cap V_2 = \emptyset$), and let $\mathcal{E}'=(V', \mathcal{B}')$ be the TTS($2v+1$) with a Hamiltonian $2$-BIG on the vertex set $V_{1} \cup V_{2}$ that is obtained by applying the $2v+1$ Construction of Section \ref{Sec:$2v+1$ Construction}. 
Suppose that there are $t$ parallel classes $\pi_{0}, \ldots , \pi_{t-1}$ in $\mathcal{E}'$ such that $\pi_{0}$ contains $\{n_{0},3,v-1\}$,  $\pi_{1}$ contains $\{n_{1},1,3\}$ and $\pi_{2}$ contains $\{n_{v-1},1,v-1\}$. (The $2$-BIG of $\mathcal{E}'$ is connected since it is Hamiltonian, so $\mathcal{B}'$ has no repeated triples and therefore the parallel classes necessarily have no common triples.) Also suppose that $(V_{3}, \mathcal{B}_{3})$ is a TTS($t$) on the vertex set $V_{3}= \{z_{0}, \ldots, z_{t-1}\}$ including the triple $\{z_{0}, z_{1}, z_{2}\}$ (the existence of such a triple can be guaranteed by permuting the point labels) where $V_{3} \cap V_{1}= \emptyset$ and $V_{3} \cap V_{2}= \emptyset$, such that its $2$-BIG is Hamiltonian if $t > 3$; and its $2$-BIG consists of two copies of the triple $\{z_{0}, z_{1}, z_{2}\}$ if $t=3$. Form $\mathcal{E}''=(V'', \mathcal{B}'')$ where $V'= V_{1} \cup V_{2} \cup V_{3}$, and $\mathcal{B}''$ is given as follows:

\begin{itemize}
\item[$\bullet$] Type 1 triples: Let $\mathcal{B}''$ contain all triples in $\mathcal{B}$, except for $\{n_{3}, n_{4}, n_{5}\}$. 
\item[$\bullet$] Type 2 triples: For each $0 \leq s \leq v-1$ and for each edge $\{i,j\}$ ($i,j \in V_{2}$) of $H_{s}$, let $\mathcal{B}''$ contain $\{n_{s},i,j\}$ except for the three triples $\{n_{3},2,4\}$, $\{n_{4},2,6\}$, $\{n_{5},4,6\}$. 
\item[$\bullet$] Type 3 triples: Let $\mathcal{B}''$ contain the three triples $\{2, n_{3}, n_{4}\}$, $\{4, n_{3}, n_{5} \}$ and $\{6, n_{4}, n_{5}\}$. 

\item[$\bullet$] Type 4 triples: Let $\mathcal{B}''$ contain the triple $\{2,4,6 \}$. 

\item[$\bullet$] Type 5 triples: If $t \geq 3$, let $\mathcal{B}''$ contain all but one of the triples of $\mathcal{B}_{3}$, with the one excluded triple being $\{z_{0}, z_{1}, z_{2}\}$
(the existence of such a triple can be guaranteed by permuting the point labels). 

\item[$\bullet$] Type 6 triples: For each $0 \leq i \leq t-1$ and each triple $\{a,b,c\}$ ($a,b,c \in V_{1}\cup V_{2}$) in the parallel class $\pi_{i}$ ($0 \leq i \leq t-1$) replace $\{a,b,c\}$ with the three triples $\{z_{i},a,b\}$,  $\{z_{i},a,c\}$ and $\{z_{i},b,c\}$; if $t\geq3$ exclude $\{z_{0},3,v-1\}$, $\{z_{1},1,3\}$ and $\{z_{2},1,v-1\}$.

\item[$\bullet$] Type 7 triples: Let $\mathcal{B}''$ contain the triples $\{1,3,v-1\}$, $\{z_{0}, z_{1},3\}$, $\{z_{0}, z_{2},v-1\}$ and $\{z_{1}, z_{2},1\}$ (note that $\{1,3,v-1\}$ does not appear elsewhere in $\mathcal{B}''$).

\end{itemize}

\begin{lemma}
\label{constructingparallelclasses} [[Construct the required parallel classes. The ideas probably work only for $v\equiv 1$ (modulo $12$).]]
\end{lemma}

\begin{theorem} \label{2v+1+t}
Suppose that $v \equiv 1$ (modulo $12$) and $v\geq \ldots $. Also let $t\geq3$. 
If there exists a \textnormal{TTS($v$)} and a \textnormal{TTS($t$)} with Hamiltonian \textnormal{$2$-BIGs}, then there exists a \textnormal{TTS($2v+1+t$)} with a Hamiltonian \textnormal{$2$-BIG}.
\end{theorem}

\begin{proof} Apply the $2v+1+t$ Construction. As is shown in the $2v+1$ Construction, the $2$-BIG, call it $G$, obtained from the Type 2 triples plus the three excluded triples is Hamiltonian. In $G$ replacing a triple $\{a,b,c\}$ that belongs to a parallel class $\pi_{i}$ with the three triples $\{z_{i},a,b\}$,  $\{z_{i},a,c\}$ and $\{z_{i},b,c\}$ has the effect of replacing a vertex with a path of length $2$. Hence the resulting structure is still Hamiltonian. Excluding the six triples $\{n_{3},2,4\}$, $\{n_{4},2,6\}$, $\{n_{5},4,6\}$, $\{z_{0},3,v-1\}$, $\{z_{1},1,3\}$ and $\{z_{2},1,v-1\}$ breaks this Hamilton cycle into six paths $P_{1}, P_{2}, P_{3}, P_{4}, P_{5}$ and $P_{6}$. The endpoints of these paths are as follows:

$P_{1}$ has endpoints $\{4, \infty, n_{3}\}$ (if $\{4, \infty, n_{3}\}$ is in some $\pi_{j}$ then the endpoint is $\{4, \pi_{j}, n_{3}\}$) and $\{v-1,2,n_{4}\}$ (if $\{v-1,2,n_{4}\}$ is in some $\pi_{j}$ then the endpoint is $\{\pi_{j},2,n_{4}\}$).

$P_{2}$ has endpoints $\{6, 4, n_{4}\}$ (if $\{6, 4, n_{4}\}$ is in some $\pi_{j}$ then the endpoint is $\{6, 4, \pi_{j}\}$) and $\{v-1,4,n_{5}\}$ (if $\{v-1,4,n_{5}\}$ is in some $\pi_{j}$ then the endpoint is $\{\pi_{j},4,n_{5}\}$).

$P_{3}$ has endpoints $\{6, \infty, n_{5}\}$ (if $\{6, \infty, n_{5}\}$ is in some $\pi_{j}$ then the endpoint is $\{6, \pi_{j}, n_{5}\}$) and $\{6,1,n_{v-1}\}$ (if $\{6, 1, n_{v-1}\}$ is in some $\pi_{j}$ then the endpoint is $\{\pi_{j}, 1, n_{v-1}\}$).

$P_{4}$ has endpoints $\{v-1, n_{v-1}, z_{2}\}$ and $\{1,v-1,n_{0}\}$ (if $\{1,v-1,n_{0}\}$ is in some $\pi_{j}$ then the endpoint is $\{1,v-1,\pi_{j}\}$).

$P_{5}$ has endpoints $\{v-1, n_{0}, z_{0}\}$ and $\{3,n_{1},z_{1}\}$.

$P_{6}$ has endpoints $\{3, v-1, n_{1}\}$ (if $\{3, v-1, n_{1}\}$ is in some $\pi_{j}$ then the endpoint is $\{3, v-1, \pi_{j}\}$) and $\{6,2,n_{3}\}$ (if $\{6,2,n_{3}\}$ is in some $\pi_{j}$ then the endpoint is $\{6,2,\pi_{j}\}$).

Deleting $\{n_{3}, n_{4}, n_{5}\}$ yields a Hamilton path $P'$ in the $2$-BIG of $\mathcal{E}$ where both triples on the ends of the Hamilton path include a different pair chosen from the set $\{n_{3}, n_{4}, n_{5}\}$ and one other point. If necessary, permute the labels $n_{3}, n_{4}, n_{5}$ such that $P'$ has ends $\{n_{3}, n_{4}, n_{i} \}$ and $\{n_{4}, n_{5}, n_{j}\}$ for some $0 \leq i,j \leq v-1$, $i,j \notin \{3,4,5\}$. Similarly if $t>3$, deleting $\{z_{0}, z_{1}, z_{2}\}$ yields a Hamilton path $P''$ in the $2$-BIG of the TTS($t$) $(V_{3}, \mathcal{B}_{3})$ where both triples on the ends of the Hamilton path include a different pair chosen from the set $\{z_{0}, z_{1}, z_{2}\}$ and one other point. If necessary, permute the labels $z_{0}, z_{1}, z_{2}$ such that $P''$ has ends $\{z_{0}, z_{1}, z_{i} \}$ and $\{z_{1}, z_{2}, z_{j}\}$ for some $0 \leq i,j \leq t-1$, $i,j \notin \{0,1,2\}$. If $t=3$, then $P''$ is just the triple $\{z_{0}, z_{1}, z_{2}\}$.

Then a Hamilton cycle in $\mathcal{E}''$ is given as follows: $(\{3,v-1,n_{1}\}, \{1,3,v-1\}, P_{4}, \{z_{0}, z_{2},v-1\}, P_{5}, \{z_{0}, z_{1},3\}, P'', \{z_{1}, z_{2},1\}, \{1,n_{v-1},z_{2}\}, \{6,1,n_{v-1}\}, P_{3}, \linebreak \{6,n_{4},n_{5}\}, P', \{2, n_{3},n_{4}\}, P_{1}, \{4,n_{3},n_{5}\}, P_{2}, \{2,4,6\}, P_{6})$.

\end{proof}
\else

\section{\boldmath$2v+2$ Construction when $v \equiv 1, 4$ (modulo 6)}
\label{Sec:$2v+2$ Construction}

In this section, the results from Section \ref{Sec:ppc} will be employed towards obtaining a TTS($2v+1$) with a parallel class when $v\equiv 1$ or $4$ (modulo $6$), thereby extending our $2v+1$ Construction to a $2v+2$ Construction. We will use the notation $[a,b]$ to denote all integers between $a$ and $b$, including $a$ and $b$.

\medskip

\noindent\textbf{$\mathbf{2\textit{v}+2}$ Construction when $v\equiv 1$ or $4$ (modulo $6$)}: Let $v=6k+1$ (for some $k \geq 1$) or $6k+4$ (for some $k \geq 2$), and $\mathcal{E}=(V_{1}, \mathcal{B})$ be a TTS($v$) on the point set $V_{1}=\{n_{0}, \ldots , n_{v-1}\}$ with a Hamiltonian \textnormal{$2$-BIG} such that $\{n_{p}, n_{q}, n_{r}\}$ is a triple of $\mathcal{B}$. 
Let $\mathcal{H}=H_{0}, \ldots, H_{v-1}$ be a Hamilton cycle decomposition of $2K_{v+1}$ on the vertex set $V_{2}= \mathbb{Z}_{v}\cup \{\infty\}$ (where $V_1 \cap V_2 = \emptyset$) where the Hamilton cycles are defined as follows:

\begin{itemize}
\item[(i)] if $v=6k+4$, then $H_{2i}=H_{i}'$ and $H_{2i+1}=H_{i}''$ where for each $0\leq j \leq (v-2)/2$ $H_{j}'' = (\infty, j, 1+j, v-1+j, 2+j, v-2+j, \ldots , (v-2)/2+j, (v+2)/2+j, v/2+j, \infty)$ and $H_{j}' = (\infty, j, v-1+j, 1+j, v-2+j, 2+j, \ldots , (v+2)/2+j, (v-2)/2+j, v/2+j, \infty)$ (additions are carried out modulo $v$),

\item[(ii)] if $v=6k+1$, then for each $0\leq j \leq 2v-1$ define a $1$-factor $F_{j}$ of $2K_{v+1}$ as $F_{j} = \big\{ \{0+j, \infty \}, \{1+j, v-1+j\}, \{2+j, v-2+j \}, \ldots , \{(v-1)/2+j,(v+1)/2+j\} \big\} $ (additions are carried out modulo $v$). So $F_{k} = F_{\ell}$ if and only if $k \equiv \ell$ (modulo $v$), and $\{F_{0}, F_{1}, \ldots , F_{2v-1}\}$ is a $1$-factorization of $2K_{v+1}$. Then for each $0 \leq s \leq v-1$ let $H_{s}$ be the Hamilton cycle in $2K_{v+1}$ formed by the union of $F_{s}$ and $F_{s+1}$. 
\end{itemize}


Form $\mathcal{E}'=(V', \mathcal{B}')$ where $V'= V_{1} \cup V_{2}$, and define $\mathcal{B}'$ by the following procedure:

\begin{itemize}
\item[$\bullet$] Step 1: Let $\mathcal{B}'$ contain all triples in $\mathcal{B}$, except for $\{n_{p}, n_{q}, n_{r}\}$. 
\item[$\bullet$] Step 2: For each $0 \leq s \leq v-1$ and for each edge $\{i,j\}$ ($i,j \in V_{2}$) of $H_{s}$, let $\mathcal{B}'$ contain $\{n_{s},i,j\}$ except for the three triples 
$\{n_{p},k-2,k-1\}$, $\{n_{q},k-2,k\}$, $\{n_{r},k-1,k\}$ if $v=6k+4$; and except for the three triples $\{n_{p},v-1,3\}$, $\{n_{q},1,3\}$, $\{n_{r},1,v-1\}$ if $v=6k+1$. (The existence of these triples will be guaranteed later by associating $n_{p}$ with a Hamilton cycle that contains the edge $\{k-2,k-1\}$, $n_{q}$ with a Hamilton cycle that contains $\{k-2,k\}$, $n_{r}$ with a Hamilton cycle that contains $\{k-1,k\}$ if $v=6k+4$, etc.)
\item[$\bullet$] Step 3: Let $\mathcal{B}'$ contain the four triples $\{k-2, n_{p}, n_{q}\}$, $\{k-1, n_{p}, n_{r} \}$, $\{k, n_{q}, n_{r}\}$ and $\{k-2,k-1,k\}$ 
if $v=6k+4$; and the four triples $\{1, n_{q}, n_{r}\}$, $\{3, n_{p}, n_{q} \}$, $\{v-1, n_{p}, n_{r}\}$ and $\{1,3,v-1\}$ if $v=6k+1$.

\end{itemize}

At this point we note that $\mathcal{E}'$ is a TTS($2v+1$) and that the $2$-BIG restricted to the triples that are formed in Step 2 together with the triples that are excluded in the same step is Hamiltonian, where a Hamilton cycle $H$ can be found by proceeding similarly as in the ``Remarks" of Section \ref{Sec:$2v+1$ Construction}. We also note that

\begin{itemize}
\item[(i)] if $v=6k+4$ excluding the three triples $\{n_{p},k-2,k-1\}$, $\{n_{q},k-2,k\}$, $\{n_{r},k-1,k\}$ breaks $H$ into three paths $P_{1}$, $P_{2}$ and $P_{3}$ where $P_{1}$ has endpoints $\{n_{p},k-1,k-3\}$ and $\{n_{q},k-1,k-2\}$, $P_{2}$ has endpoints $\{n_{q},k,k-3\}$ and $\{n_{r},\infty,k-1\}$, and $P_{3}$ has endpoints $\{n_{r},k,k-2\}$ and $\{n_{p},\infty,k-2\}$,

\item[(ii)] if $v=6k+1$ excluding the three triples $\{n_{p},v-1,3\}$, $\{n_{q},1,3\}$, $\{n_{r},1,v-1\}$ breaks $H$ into three paths $P_{1}$, $P_{2}$ and $P_{3}$ where $P_{1}$ has endpoints $\{n_{p},3,v-3\}$ and $\{n_{q},\infty,1\}$, $P_{2}$ has endpoints $\{n_{q},3,v-1\}$ and $\{n_{r},v-3,1\}$, and $P_{3}$ has endpoints $\{n_{r},v-1,\infty\}$ and $\{n_{p},1,v-1\}$.

\end{itemize}

Suppose that $\mathcal{E}'$ has a parallel class $\pi$. The next step will enable us to form a TTS($2v+2$) $\mathcal{E}''=(V'', \mathcal{B}'')$ with a Hamiltonian $2$-BIG from $\mathcal{E}'$ by blowing up the triples in $\pi$.

\begin{itemize}
\item[$\bullet$] Step 4: Let $V''=V'\cup \{z\}$ ($z \notin V'$), and let $\mathcal{B}''$ contain all triples in $\mathcal{B}'\setminus \pi$. For each triple $\{a,b,c\}$ in $\pi$, let $\mathcal{B}''$ also contain the three triples $\{a,b,z\}, \{a,c,z\}$ and $\{b,c,z\}$.
\end{itemize}

In what follows first we present an explicit procedure to find the required parallel class $\pi$ in the TTS($2v+1$) $\mathcal{E}'$ and then we prove that the $2$-BIG of the TTS($2v+2$) $\mathcal{E}''$ that is obtained following Steps 1-4 is indeed Hamiltonian.

\medskip

In the case when $v=6k+4$ ($k\geq2$), use Theorem \ref{lowerbound} to find a partial parallel class $\pi_{0}$ in $\mathcal{E}$ with $(v+8)/6$ triples. Let $T=\{n_{p}, n_{q}, n_{r}\}$ be one of the triples in $\pi_{0}$ and let $A$ be the set of all points that occur in a triple of $\pi_{0}$. Let $\mathcal{H}= \{H_{0}, \ldots, H_{v-1}\}$ be the Hamilton cycle decomposition of $2K_{v+1}$ on the vertex set $V_{2}= \mathbb{Z}_{v}\cup \{\infty\}$ where $H_{2i}=H_{i}'$ and $H_{2i+1}=H_{i}''$ that is described in (i) of the $2v+2$ Construction. 
We define the operation 
of ``attaching" a point $n_{x}$ in $V_{1}$ to a Hamilton cycle $H_{y}$ as replacing $H_{y}$ with the $3$-cycles in $\{n_{x} \vee \{a,b\}| \{a,b\} \in E(H_{y}) \}$ (where $\vee$ denotes the \textit{join} operation). For each point $n_{x}$ in $A\setminus T$, attach $n_{x}$ to some distinct $H_{y}$ where $0\leq y \leq v-1$ such that $y \notin [2k-4,4k-2]$, $y\neq 5k+4j+4$, $y\neq 5k+4j+5$ ($0\leq j \leq (k-2)/2-1$) if $k$ is even; $y \notin [2k-4,4k-2]$, $y\neq 5k+4j+3$, $y\neq 5k+4j+4$ ($0\leq j \leq (k-3)/2-1$) and $y\neq 2k-10$ (all subscripts are calculated modulo $v$) if $k$ is odd. Also attach $n_{p}$ to $H_{2k-3}$, $n_{q}$ to $H_{2k-2}$ and $n_{r}$ to $H_{2k-1}$. Note that $\{k-2,k-1\} \in E(H_{2k-3})$, $\{k-2,k\} \in E(H_{2k-2})$ and  $\{k-1,k\} \in E(H_{2k-1})$. Replace the four triples $T, \{n_{p},k-2,k-1\}, \{n_{q}, k-2,k\}, \{n_{r},k-1,k\}$ with the four triples $\{k-2,k-1,k\}, \{k-2,n_{p},n_{q}\}, \{k-1,n_{p},n_{r}\}, \{k,n_{q},n_{r}\}$. Consider the edges $\{k+1+2j,v+k-6-j\} \in E(H_{2(k-2)+j})$ ($0\leq j \leq 2k-2$), $\{5k-1, \infty\} \in E(H_{4k-5})$, $\{3k-2, k-3\} \in E(H_{4k-4})$, $\{3k, k-4\} \in E(H_{4k-3})$,  $\{3k+2, k-5\} \in E(H_{4k-2})$. Also consider the edges $\{k+2\ell+2, 4k+2\ell+2\} \in E(H_{5k+4\ell+4})$ and $\{3k+2\ell+4, 2k+2\ell\} \in  E(H_{5k+4\ell+5})$ ($0\leq \ell \leq (k-2)/2-1$) if $k$ is even; $\{k+2\ell+2, 4k+2\ell+1\} \in E(H_{5k+4\ell+3})$, $\{3k+2\ell+4, 2k+2\ell-1\} \in  E(H_{5k+4\ell+4})$ ($0\leq \ell \leq (k-3)/2-1$) and $\{3k-4, 5k-2\} \in E(H_{2k-10})$ if $k$ is odd (all subscripts are calculated modulo $v$). These edges together with the triple $\{k-2,k-1,k\}$ partition $V_{2}$. (Note that none of these edges include $\{k-2,k-1\} \in E(H_{2k-3})$, $\{k-2,k\} \in E(H_{2k-2})$ and  $\{k-1,k\} \in E(H_{2k-1})$.) Attach each vertex in $V_{1} \setminus (A\setminus T)$ with a distinct Hamilton cycle from the collection $\big\{\{ \bigcup_{j=0}^{2k-2} H_{2(k-2)+j}\}, \{H_{4k-5}, H_{4k-4}, H_{4k-3},  H_{4k-2}\}, \{\bigcup_{\ell=0}^{(k-2)/2-1} H_{5k+4\ell+4}\},
\linebreak \{\bigcup_{\ell=0}^{(k-2)/2-1} H_{5k+4\ell+5}\}\big\}$ if $k$ is even; $\big\{ \{\bigcup_{j=0}^{2k-2} H_{2(k-2)+j}\}, \{H_{4k-5}, H_{4k-4}, H_{4k-3}, \linebreak H_{4k-2}\}, \{\bigcup_{\ell=0}^{(k-3)/2-1} H_{5k+4\ell+3}\}, \{\bigcup_{\ell=0}^{(k-3)/2-1} H_{5k+4\ell+4}\}, \{H_{2k-10} \}\big\}$ if $k$ is odd (all subscripts are calculated modulo $v$). Note that these are the Hamilton cycles that contain the edges considered earlier. Then the triples formed from these edges together with $\pi_{0} \setminus T$ and $\{k-2,k-1,k\} $ yield a parallel class $\pi$ in the TTS($2v+1$) $\mathcal{E}'$. 
\medskip

In the case when $v=6k+1$ ($k \geq 1$), use Theorem \ref{lowerbound} to find a partial parallel class $\pi_{0}$ in $\mathcal{E}$ with $(v+5)/6$ triples. Let $T=\{n_{p}, n_{q}, n_{r}\}$ be one of the triples in $\pi_{0}$ and let $A$ be the set of all points that occur in a triple of $\pi_{0}$. Let $\mathcal{H}=\{H_{0}, \ldots, H_{v-1}\}$ be the Hamilton cycle decomposition of $2K_{v+1}$ on the vertex set $V_{2}= \mathbb{Z}_{v}\cup \{\infty\}$ that is described in (ii) of the $2v+2$ Construction. 
We consider the cases $k$ even and $k$ odd separately.

Suppose $k$ is even. For each point $n_{x}$ in $A\setminus T$, attach $n_{x}$ to some distinct $H_{y}$ where $2\leq y \leq (v+3)/2$ such that $y\neq 4$. Also attach $n_{p}$ to $H_{0}$, $n_{q}$ to $H_{1}$ and $n_{r}$ to $H_{v-1}$. Note that $\{v-1,3\} \in E(H_{0})$, $\{1,3\} \in E(H_{1})$ and  $\{1,v-1\} \in E(H_{v-1})$. Replace the four triples $T, \{n_{p},v-1,3\}, \{n_{q},1,3\}, \{n_{r},1,v-1\}$ with the four triples $\{1,3,v-1\}, \{3,n_{p},n_{q}\}, \{v-1,n_{p},n_{r}\}, \{1,n_{q},n_{r}\}$.
For each $i \in [(v+5)/2,v+1]$ consider the edge $\{3+i, v-3+i\} \in H_{i}$ (all subscripts are calculated modulo $v$), and also consider the edge  $\{5, \infty\} \in H_{4}$.
These edges 
partition $V_{2}$. Note that none of these edges include $\{v-1,3\} \in E(H_{0})$, $\{1,3\} \in E(H_{1})$ and  $\{1,v-1\} \in E(H_{v-1})$. Attach each vertex in $V_{1} \setminus (A\setminus T)$ with a distinct Hamilton cycle from the collection $\{H_{i}| i\in \{4 \}\cup [(v+5)/2,v+1]\}$. Then the triples formed from these edges together with $\pi_{0} \setminus T$ 
yield a parallel class in the TTS($2v+1$) $\mathcal{E}'$. 

Suppose $k$ is odd. Now for each point $n_{x}$ in $A\setminus T$, attach $n_{x}$ to some distinct $H_{y}$ where $y \notin L=\{0,1,4,5,8,9, \ldots , (v-5)/2, (v+1)/2, (v+7)/2, (v+9)/2, (v+7)/2+4, (v+9)/2+4, (v+7)/2+8, (v+9)/2+8, \ldots, v-4, v-3, v-1\}$ if $k \equiv 1$ (modulo $4$); and where $y \notin L=\{0,1,4,5,8,9, \ldots , (v-11)/2, (v-9)/2, (v-5)/2, (v+1)/2, (v+7)/2, (v+11)/2, (v+13)/2, (v+11)/2+4, (v+13)/2+4, (v+11)/2+8, (v+13)/2+8, \ldots, v-4, v-3, v-1\}$ if $k \equiv 3$ (modulo $4$). Also attach $n_{p}$ to $H_{0}$, $n_{q}$ to $H_{1}$ and $n_{r}$ to $H_{v-1}$. Note that $\{v-1,3\} \in E(H_{0})$, $\{1,3\} \in E(H_{1})$ and  $\{1,v-1\} \in E(H_{v-1})$. Replace the four triples $T, \{n_{p},v-1,3\}, \{n_{q},1,3\}, \{n_{r},1,v-1\}$ with the four triples $\{1,3,v-1\}, \{3,n_{p},n_{q}\}, \{v-1,n_{p},n_{r}\}, \{1,n_{q},n_{r}\}$. Let $g: L\rightarrow \mathbb{Z}_{(v+1)/2}$ be a one-to-one function such that for any pair $y_{1},y_{2}\in L$, $y_{1}<y_{2}$ implies $g(y_{1})<g(y_{2})$, so that $g$ is a map from $L$ to $\mathbb{Z}_{(v+1)/2}$ that preserves the order of the elements. 
For each $i \in [0, (v-3)/4]$ consider the edge $\{(v-1)/2+2i,(v+1)/2+2i\} \in H_{g^{-1}(i)}$, 
for each $i \in [(v+1)/4, (v-3)/2]$ consider the edge $\{1+2(i-(v+1)/4),2+2(i-(v+1)/4)\} \in H_{g^{-1}(i)}$ and also consider the edge  $\{ 0, \infty \} \in H_{g^{-1}((v-1)/2)}=H_{v-1}$. These edges partition $V_{2}$. Note that none of these edges include $\{v-1,3\} \in E(H_{0})$, $\{1,3\} \in E(H_{1})$ and  $\{1,v-1\} \in E(H_{v-1})$. 
Attach each vertex in $V_{1} \setminus (A\setminus  T)$ with a distinct Hamilton cycle from the collection $\big\{\{\bigcup_{i=0}^{(v-3)/4} H_{g^{-1}(i)}\}, \{\bigcup_{i=(v+1)/4}^{(v-3)/2} H_{g^{-1}(i)}\}, \{H_{v-1}\}\big\}$. Then the triples formed from these edges together with $\pi_{0} \setminus T$ yield a parallel class in the TTS($2v+1$) $\mathcal{E}'$. 


\begin{theorem} \label{2v+2}
If there exists a \textnormal{TTS($v$)} where $v=6k+1$ \textnormal{($k \geq 1$)} or $v=6k+4$ \textnormal{($k\geq 2$)} with a Hamiltonian \textnormal{$2$-BIG}, then there exists a \textnormal{TTS($2v+2$)} with a Hamiltonian \textnormal{$2$-BIG}.
\end{theorem}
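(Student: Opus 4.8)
The plan is to combine the $2v+2$ Construction (Steps 1--4) with the Hamiltonicity argument already established for the $2v+1$ Construction, treating the transition from $\mathcal{E}'$ to $\mathcal{E}''$ as a controlled local surgery on an existing Hamilton cycle. First I would establish that the explicit partial-parallel-class procedure described immediately before the theorem statement genuinely produces a parallel class $\pi$ of $\mathcal{E}'$: the triples of $\pi_0 \setminus T$ (coming from $\mathcal{E}$) cover the points of $A \setminus T$ in $V_1$; the ``attach'' operation assigns each remaining point $n_x \in V_1$ to a distinct Hamilton cycle $H_y$ and hence, via a chosen edge of $H_y$, produces a Type 2 triple covering $n_x$ together with two points of $V_2$; and the specified edges, together with the Type 4 triple $\{k-2,k-1,k\}$ (resp.\ $\{1,3,v-1\}$) and the three Type 3 triples arising from the replacement in Step 3, partition $V_2$. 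The bookkeeping here is exactly what Theorem~\ref{lowerbound} is for: it guarantees $\pi_0$ is large enough (size $(v+8)/6$ when $v = 6k+4$, size $(v+5)/6$ when $v=6k+1$) that enough points of $V_1$ are already covered by the design $\mathcal{E}$ so that the leftover points can each be matched injectively to a distinct Hamilton cycle of the decomposition $\mathcal{H}$. I would verify that every point of $V' = V_1 \cup V_2$ lies in exactly one triple of $\pi$, with the case analysis on the parity of $k$ handled by simply checking that the listed index sets and edges are disjoint and exhaustive (a routine, if tedious, count).

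With $\pi$ in hand, the heart of the proof is the Hamiltonicity of the $2$-BIG of $\mathcal{E}''$. My approach is to start from the Hamilton cycle $H$ in the $2$-BIG of the triples produced in Step 2 (together with the three excluded triples), whose existence and the effect of deleting the three excluded triples (splitting $H$ into paths $P_1, P_2, P_3$ with the explicitly listed endpoints) are already recorded in the excerpt. Step 3 reattaches the Type 3 and Type 4 triples to reconnect these three paths into a single Hamilton cycle of the $2$-BIG of $\mathcal{E}'$, precisely as in the proof of Theorem~\ref{2v+1} but now using the new gluing triples $\{k-2,k-1,k\},\{k-2,n_p,n_q\},\dots$ (resp.\ $\{1,3,v-1\},\dots$); I would exhibit the closed walk $(P_3, \{k,n_q,n_r\}, P_2, \{k-1,n_p,n_r\}, P_1, \{k-2,n_p,n_q\}, \{k-2,k-1,k\})$-style sequence and check that consecutive triples intersect in exactly two points, so that it is a genuine Hamilton cycle in the $2$-BIG of $\mathcal{E}'$.

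The key final observation is that Step 4 is a vertex-expansion operation that preserves Hamiltonicity. Blowing up each triple $\{a,b,c\} \in \pi$ into the three triples $\{a,b,z\},\{a,c,z\},\{b,c,z\}$ replaces the single vertex $\{a,b,c\}$ of the $2$-BIG by a triangle on these three new vertices, each pairwise intersecting in $\{z\}$ together with one of $a,b,c$ (so the three new vertices are mutually $2$-adjacent and form a triangle in the $2$-BIG). Moreover, each new triple $\{a,b,z\}$ retains every old $2$-intersection with triples outside $\pi$ that used the pair $\{a,b\}$. The decisive point, which I would verify, is that where the Hamilton cycle of $\mathcal{E}'$ entered and left the vertex $\{a,b,c\}$ it did so through two distinct pairs among $\{a,b\},\{a,c\},\{b,c\}$ (any $2$-adjacency in the $2$-BIG is realized through a shared pair); this lets me route the cycle through two of the three new triangle-vertices and then detour through the third via the triangle edges, so each blow-up replaces a single visited vertex by a path of length two through the new triangle, keeping the whole structure a Hamilton cycle that now also visits $z$ (this is exactly the ``replacing a vertex with a path of length $2$'' device used in the commented-out Theorem~\ref{2v+1+t}).

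The main obstacle I expect is the first part rather than the last: rigorously confirming that the elaborate, parity-dependent lists of attached Hamilton cycles and chosen edges really do partition $V_2$ while leaving the three special edges $\{k-2,k-1\},\{k-2,k\},\{k-1,k\}$ (resp.\ $\{v-1,3\},\{1,3\},\{1,v-1\}$) untouched, and that the number of Hamilton cycles consumed matches the number of uncovered points of $V_1$ exactly. This is where an off-by-one error or an overlooked index collision would break the construction, so I would treat the partition verification and the injectivity of the ``attach'' assignment as the technical crux, handling the four subcases ($v = 6k+4$ with $k$ even/odd, $v=6k+1$ with $k$ even/odd) by careful but elementary modular arithmetic; once the parallel class $\pi$ is known to exist, the Hamiltonicity transfer through Steps 3 and 4 is conceptually straightforward.
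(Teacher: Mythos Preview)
Your approach is essentially the paper's: confirm that $\pi$ is a parallel class of $\mathcal{E}'$, assemble a Hamilton cycle in the $2$-BIG of $\mathcal{E}'$ as in Theorem~\ref{2v+1}, and then observe that each Step~4 blow-up replaces a vertex by a triangle through which the cycle can be rerouted as a path of length two. The paper does exactly this, only in the opposite order (it blows up the constituent paths $P, P_1, P_2, P_3$ to $P', P_1', P_2', P_3'$ first and then stitches), which is equivalent.

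One genuine omission in your sketch, however: the closed walk you write down, $(P_3, \{k,n_q,n_r\}, P_2, \{k-1,n_p,n_r\}, P_1, \{k-2,n_p,n_q\}, \{k-2,k-1,k\})$, is neither closed (the last two triples share only the single point $k-2$) nor Hamiltonian in $\mathcal{E}'$, because it visits only the Step~2 and Step~3 blocks and entirely omits the Hamilton path $P$ through the Type~1 triples, i.e.\ through $\mathcal{B} \setminus \{\{n_p,n_q,n_r\}\}$. These blocks are part of $\mathcal{B}'$ and must be traversed. The fix is straightforward: the endpoints of $P$ contain the pairs $\{n_p,n_q\}$ and $\{n_q,n_r\}$ (after the label permutation), so $P$ slots between the Step~3 triples $\{k,n_q,n_r\}$ and $\{k-2,n_p,n_q\}$, giving (before blow-up) the cycle $(P,\{n_p,n_q,k-2\},P_3,\{k-2,k-1,k\},P_1,\{n_p,n_r,k-1\},P_2,\{n_q,n_r,k\})$, which is precisely what the paper records after applying the blow-up to each piece.
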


\begin{proof}

Suppose that $\mathcal{E}=(V_{1}, \mathcal{B})$ is a TTS($v$) on the point set $V_{1}=\{n_{0}, \ldots , \linebreak n_{v-1}\}$ with a Hamiltonian $2$-BIG such that $\{n_{p}, n_{q}, n_{r}\}$ is a triple. Deleting $\{n_{p}, n_{q}, n_{r}\}$ yields a Hamilton path $P$ in the $2$-BIG where both triples on the ends of the Hamilton path include a different pair chosen from the set $\{n_{p}, n_{q}, n_{r}\}$ and one other point.

Suppose $v=6k+4$, permute the labels $n_{p}, n_{q}, n_{r}$ such that $P$ has ends $\{n_{q}, n_{r}, n_{i} \}$ and $\{n_{p}, n_{q}, n_{j}\}$ for some $0 \leq i,j \leq v-1$; $i,j \notin \{p,q,r\}$. Apply the $2v+2$ Construction. Let $P', P_{1}', P_{2}'$ and $P_{3}'$ denote the paths that are formed from $P, P_{1}, P_{2}$ and $P_{3}$ respectively after blowing up the triples in $\pi$, where $\pi$ is a parallel class of $\mathcal{E}'$ (the TTS($2v+1$) obtained in the initial step of the $2v+2$ Construction). Then a Hamilton cycle $H$ in the $2$-BIG of the TTS($2v+2$) $\mathcal{E}''$ is given as follows: $(P', \{n_{p},n_{q}, k-2\}, P_{3}', \{k-2,k,z\}, \{k-1,k,z\}, \{k-2,k-1,z\}, P_{1}', \{n_{p},n_{r},k-1\}, P_{2}', \{n_{q},n_{r},k\})$.

Now suppose that $v=6k+1$, permute the labels $n_{p}, n_{q}, n_{r}$ such that $P$ has ends $\{n_{p}, n_{r}, n_{i} \}$ and $\{n_{p}, n_{q}, n_{j}\}$ for some $0 \leq i,j \leq v-1$; $i,j \notin \{p,q,r\}$. Apply the $2v+2$ Construction. Then a Hamilton cycle $H$ in the $2$-BIG of the TTS($2v+2$) $\mathcal{E}''$ is given as follows: $(P', \{n_{p},n_{q}, 3\}, P_{1}', \{n_{q},n_{r},1\}, P_{2}', \{1,3,v-1\}, P_{3}', \{n_{p},n_{r},v-1\})$.

Note that if an endpoint $\{a,b,c\}$ of one of the paths $P, P_{1}, P_{2}$ and $P_{3}$ is in the parallel class $\pi$, then after blowing up the triples in $\pi$, $\{a,b,c\}$ is replaced with $\{a,b,z\}$, $\{a,c,z\}$ and $\{b,c,z\}$ with an appropriate ordering of these triples so that in the $2$-BIG the triple at the end of this modified path is adjacent to the neighbour of $\{a,b,c\}$ in $H$.
\end{proof}

\section{\boldmath$3v$ Construction} \label{Sec:$3v$ Construction}

In this section we present a tripling construction for TTSs with Hamiltonian $2$-BIGs.

\textbf{$\mathbf{3\textit{v}}$ Construction}: Let $\mathcal{A}=(V, \mathcal{B})$ be a TTS($v$) on the point set $V=\mathbb{Z}_{v}$ where $ \{0, 1, v-1 \} \in \mathcal{B}$ (if necessary, the points of $V$ can be permuted so that such a block exists). Suppose that the $2$-BIG of $\mathcal{A}$ is Hamiltonian (so necessarily $v\geq 4$). Let $Q_{1}=(V,\circ_{1})$ and $Q_{2}=(V,\circ_{2})$ be two quasigroups of order $v$ where for $i,j \in \mathbb{Z}_{v}$, $i\circ_{1}j=i+j$ (modulo $v$) and $i\circ_{2}j=i+j+1$ (modulo $v$). Form $\mathcal{A}'=(V', \mathcal{B}')$ where $V'=V\times \{1,2,3\}$, and define $\mathcal{B}'$ as follows:

\begin{itemize}
\item[$\bullet$] Type 1 triples: For each triple $\{x,y,z\} \in \mathcal{B}$, let $ \mathcal{B}'$ contain the three triples $\{(x,1),(y,1),(z,1)\}, \{(x,2),(y,2),(z,2)\}, \{(x,3),(y,3),(z,3)\}$, except for the three triples $\{(0,1), (1,1), (v-1,1)\}, \{(0,2), (1,2), (v-1,2)\},$ $ \{(0,3), (1,3), (v-1,3)\}$.
\item[$\bullet$] Type 2 triples: For each $i,j \in \mathbb{Z}_{v}$, let $\mathcal{B}'$ contain the triple $\{(i,1),(j,2),$ $(i\circ_{2}j,3)\}$.
\item[$\bullet$] Type 3 triples: For each $i,j \in \mathbb{Z}_{v}$, let $\mathcal{B}'$ contain the triple $\{(i,1),(j,2),$ $(i\circ_{1}j,3)\}$, except for the six triples listed below.
\begin{center}
$\begin{array}{ccc}

\{(0, 1), (1, 2), (1, 3)\} &\hspace*{5mm} &\{(1, 1), (0, 2), (1, 3)\} \\
\{(0, 1), (v-1, 2), (v-1, 3)\} && \{(v-1, 1), (0, 2), (v-1, 3)\} \\
\{(1, 1), (v-1, 2), (0, 3)\} && \{(v-1, 1), (1, 2), (0, 3)\} \\

\end{array}$
\end{center}

\item[$\bullet$] Type 4 triples: Let $\mathcal{B}'$ contain the nine triples listed below.
\begin{center}
$\begin{array}{ccc}

\{(0, 1), (1, 1), (1, 3)\}&\hspace*{5mm}& \{(0, 1), (v-1, 1), (v-1, 3)\} \\
\{(1, 1), (v-1, 1), (0, 3)\}&& \{(v-1, 1), (0,2), (1,2)\}\\
\{(1, 1), (0, 2), (v-1, 2)\}&& \{(0, 1), (1, 2), (v-1, 2)\}  \\
\{(1, 2), (0, 3), (1, 3)\}&& \{(0, 2), (1, 3),(v-1, 3)\} \\
         \{(v-1, 2), (0, 3), (v-1, 3)\}&& \\

\end{array}$
\end{center}

\end{itemize}

In what follows we show that the $3v$ Construction can be used to construct from a TTS$(v)$ with a Hamiltonian $2$-BIG a TTS$(3v)$ whose $2$-BIG is Hamiltonian.

First note that Type $1$ triples yield three copies of $\mathcal{A}$ with the triple $\{0,1,v-1\}$ being deleted. Clearly for each such copy the $2$-BIG has a Hamilton path. Also note that the set of six pairwise disjoint paths given as below spans the vertex set of the $2$-BIG of $\mathcal{A}'$ restricted to Type $2$ and Type $3$ triples (see Figure~\ref{fig:hex}).

\begin{figure}[h!]
  \begin{center}
    \begin{tikzpicture}

      \draw [black,solid] (-5,3.5) node {$i$};
      \draw [black,solid] (-4.5,4) node {$j$};

      \draw [black] (-3.25,4) node {$0$};
      \draw [black] (-1.75,4) node {$1$};
      \draw [black] (-0.25,4) node {$2$};
      \draw [black] (1.25,4) node {$3$};
      \draw [black] (2.75,4) node {$4$};
      \draw [black] (4.25,4) node {$5$};
\draw [black] (5.75,4) node {$6$};

      \draw [black] (-5,2.75) node {$0$};
      \draw [black] (-5,1.25) node {$1$};
      \draw [black] (-5,-0.25) node {$2$};
      \draw [black] (-5,-1.75) node {$3$};
      \draw [black] (-5,-3.25) node {$4$};
      \draw [black] (-5,-4.75) node {$5$};
\draw [black] (-5,-6.25) node {$6$};

      \draw [red,thin,dashed] (-3.5,2.5) to (-3.25,3.5);
      \draw [orange,thin,dashed] (-0.5,2.5) to (-0.25,3.5);
      \draw [orange,thin,dashed] (1,2.5) to (1.25,3.5);
      \draw [orange,thin,dashed] (2.5,2.5) to (2.75,3.5);
      \draw [orange,thin,dashed] (4,2.5) to (4.25,3.5);

      \draw [red,thin,dashed] (-3.5,2.5) to (-4,2.75);

      \draw [black,fill] (-3.5,2.5) circle [radius=0.1];
      \draw [thin] (-3.5,2.5) to (-3,3);
      \draw [black,solid] (-3,3) circle [radius=0.1];
         \draw [gray,solid] (-3,3) circle [radius=0.15];


      \draw [black,solid] (-1.5,3) circle [radius=0.1];

      \draw [orange,thin] (-1.5,3) to (-0.5,2.5);

      \draw [black,fill] (-0.5,2.5) circle [radius=0.1];
      \draw [thin] (-0.5,2.5) to (0,3);
      \draw [black,solid] (0,3) circle [radius=0.1];

      \draw [orange,thin] (0,3) to (1,2.5);

      \draw [black,fill] (1,2.5) circle [radius=0.1];
      \draw [thin] (1,2.5) to (1.5,3);
      \draw [black,solid] (1.5,3) circle [radius=0.1];

      \draw [orange,thin] (1.5,3) to (2.5,2.5);

      \draw [black,fill] (2.5,2.5) circle [radius=0.1];
      \draw [thin] (2.5,2.5) to (3,3);
      \draw [black,solid] (3,3) circle [radius=0.1];
      \draw [blue,solid] (4.5,3) circle [radius=0.15];

      \draw [orange,thin] (3,3) to (4,2.5);

      \draw [black,fill] (4,2.5) circle [radius=0.1];
      \draw [thin] (4,2.5) to (4.5,3);
      \draw [black,solid] (4.5,3) circle [radius=0.1];

      \draw [red,thin,dashed] (6,3) to (6.5,2.75);


\draw [orange,thin] (5.5,-0.5) to (4.5,0);
\draw [orange,thin] (5.5,-2) to (4.5,-1.5);
\draw [orange,thin] (5.5,-3.5) to (4.5,-3);
\draw [orange,thin] (5.5,-5) to (4.5,-4.5);
\draw [orange,thin] (5.5,-6.5) to (4.5,-6);
\draw [orange,thin] (4,-6.5) to (3,-6);
\draw [orange,thin] (2.5,-6.5) to (1.5,-6);
\draw [orange,thin] (1,-6.5) to (0,-6);
\draw [orange,thin] (-0.5,-6.5) to (-1.5,-6);
\draw [orange,thin] (-0.5,-6.5) to (0,-4.5);
\draw [orange,thin] (1,-6.5) to (1.5,-4.5);
\draw [orange,thin] (2.5,-6.5) to (3,-4.5);
\draw [orange,thin] (4,-6.5) to (4.5,-4.5);
\draw [black,thin] (5.5,-6.5) to (6,-4.5);
\draw [black,thin] (5.5,-5) to (6,-3);
\draw [black,thin] (5.5,-3.5) to (6,-1.5);
\draw [black,thin] (5.5,-2) to (6,0);
\draw [black,thin] (5.5,-0.5) to (6,1.5);

      \draw [orange,thin] (-1.5,3) to (-2,1);
      \draw [orange,thin] (0,3) to (-.5,1);
      \draw [orange,thin] (1.5,3) to (1,1);
      \draw [orange,thin] (3,3) to (2.5,1);
      \draw [blue,thin] (4.5,3) to (4,1);


      \draw [black,solid] (-3,1.5) circle [radius=0.1];

      \draw [orange,thin] (-3,1.5) to (-2,1);

      \draw [black,fill] (-2,1) circle [radius=0.1];
      \draw [thin] (-2,1) to (-1.5,1.5);
      \draw [black,solid] (-1.5,1.5) circle [radius=0.1];

      \draw [orange,thin] (-1.5,1.5) to (-0.5,1);

      \draw [black,fill] (-0.5,1) circle [radius=0.1];
      \draw [thin] (-0.5,1) to (0,1.5);
      \draw [black,solid] (0,1.5) circle [radius=0.1];

      \draw [orange,thin] (0,1.5) to (1,1);

      \draw [black,fill] (1,1) circle [radius=0.1]; 
      \draw [thin] (1,1) to (1.5,1.5);
      \draw [black,solid] (1.5,1.5) circle [radius=0.1]; 

      \draw [black,fill] (0,-7.5) circle [radius=0.1] node [black,right] {\footnotesize{$\{(i,1),(j,2),(i\circ_{1}j,3)\}$}};
      \draw [black,solid] (0,-8) circle [radius=0.1] node [black,right] {\footnotesize{$\{(i,1),(j,2),(i\circ_{2}j,3)\}$}};

      \draw [orange,thin] (1.5,1.5) to (2.5,1);

      \draw [black,fill] (2.5,1) circle [radius=0.1];
      \draw [thin] (2.5,1) to (3,1.5);
      \draw [black,solid] (3,1.5) circle [radius=0.1];

      \draw [blue,thin] (3,1.5) to (4,1);

      \draw [black,fill] (4,1) circle [radius=0.1];
      \draw [thin] (4,1) to (4.5,1.5);
      \draw [black,solid] (4.5,1.5) circle [radius=0.1];
       \draw [green,solid] (4.5,1.5) circle [radius=0.15];


	  \draw [orange,thin] (-3,1.5) to (-3.5,-0.5);
      \draw [orange,thin] (-1.5,1.5) to (-2,-0.5);
      \draw [orange,thin] (0,1.5) to (-.5,-0.5);
      \draw [orange,thin] (1.5,1.5) to (1,-0.5);
      \draw [blue,thin] (3,1.5) to (2.5,-0.5);
      \draw [green,thin] (4.5,1.5) to (4,-0.5);

      \draw [orange,thin,dashed] (-3.5,-0.5) to (-4,-0.25);

      \draw [black,fill] (-3.5,-0.5) circle [radius=0.1];
      \draw [thin] (-3.5,-0.5) to (-3,0);
      \draw [black,solid] (-3,0) circle [radius=0.1];

      \draw [orange,thin] (-3,0) to (-2,-.5);

      \draw [black,fill] (-2,-.5) circle [radius=0.1];
      \draw [thin] (-2,-.5) to (-1.5,0);
      \draw [black,solid] (-1.5,0) circle [radius=0.1];

      \draw [orange,thin] (-1.5,0) to (-0.5,-.5);

      \draw [black,fill] (-0.5,-.5) circle [radius=0.1];
      \draw [thin] (-0.5,-.5) to (0,0);
      \draw [black,solid] (0,0) circle [radius=0.1];

      \draw [orange,thin] (0,0) to (1,-.5);

      \draw [black,fill] (1,-.5) circle [radius=0.1];
      \draw [thin] (1,-.5) to (1.5,0);
      \draw [black,solid] (1.5,0) circle [radius=0.1];

      \draw [blue,thin] (1.5,0) to (2.5,-.5);

      \draw [black,fill] (2.5,-.5) circle [radius=0.1];
      \draw [thin] (2.5,-.5) to (3,0);
      \draw [black,solid] (3,0) circle [radius=0.1];

      \draw [green,thin] (3,0) to (4,-.5);

      \draw [black,fill] (4,-.5) circle [radius=0.1];
      \draw [thin] (4,-.5) to (4.5,0);
      \draw [black,solid] (4.5,0) circle [radius=0.1];

      \draw [orange,thin,dashed] (6,0) to (6.5,-0.25);

      \draw [orange,thin] (-3,0) to (-3.5,-2);
      \draw [orange,thin] (-1.5,0) to (-2,-2);
      \draw [orange,thin] (0,0) to (-0.5,-2);
      \draw [blue,thin] (1.5,0) to (1,-2);
      \draw [green,thin] (3,0) to (2.5,-2);
      \draw [orange,thin] (4.5,0) to (4,-2);

      \draw [orange,thin,dashed] (-3.5,-2) to (-4,-1.75);

      \draw [black,fill] (-3.5,-2) circle [radius=0.1];
      \draw [thin] (-3.5,-2) to (-3,-1.5);
      \draw [black,solid] (-3,-1.5) circle [radius=0.1];

      \draw [orange,thin] (-3,-1.5) to (-2,-2);

      \draw [black,fill] (-2,-2) circle [radius=0.1];
      \draw [thin] (-2,-2) to (-1.5,-1.5);
      \draw [black,solid] (-1.5,-1.5) circle [radius=0.1];

      \draw [orange,thin] (-1.5,-1.5) to (-0.5,-2);

      \draw [black,fill] (-0.5,-2) circle [radius=0.1];
      \draw [thin] (-0.5,-2) to (0,-1.5);
      \draw [black,solid] (0,-1.5) circle [radius=0.1];

      \draw [blue,thin] (0,-1.5) to (1,-2);

      \draw [black,fill] (1,-2) circle [radius=0.1];
      \draw [thin] (1,-2) to (1.5,-1.5);
      \draw [black,solid] (1.5,-1.5) circle [radius=0.1];

      \draw [green,thin] (1.5,-1.5) to (2.5,-2);

      \draw [black,fill] (2.5,-2) circle [radius=0.1];
      \draw [thin] (2.5,-2) to (3,-1.5);
      \draw [black,solid] (3,-1.5) circle [radius=0.1];

      \draw [orange,thin] (3,-1.5) to (4,-2);

      \draw [black,fill] (4,-2) circle [radius=0.1];
      \draw [thin] (4,-2) to (4.5,-1.5);
      \draw [black,solid] (4.5,-1.5) circle [radius=0.1];

      \draw [orange,thin,dashed] (6,-1.5) to (6.5,-1.75);

      \draw [orange,thin] (-3,-1.5) to (-3.5,-3.5);
      \draw [orange,thin] (-1.5,-1.5) to (-2,-3.5);
      \draw [blue,thin] (0,-1.5) to (-.5,-3.5);
      \draw [green,thin] (1.5,-1.5) to (1,-3.5);
      \draw [orange,thin] (3,-1.5) to (2.5,-3.5);
      \draw [orange,thin] (4.5,-1.5) to (4,-3.5);

      \draw [orange,thin,dashed] (-3.5,-3.5) to (-4,-3.25);

      \draw [black,fill] (-3.5,-3.5) circle [radius=0.1];
      \draw [thin] (-3.5,-3.5) to (-3,-3);
      \draw [black,solid] (-3,-3) circle [radius=0.1];
      \draw [blue,solid] (-3,-4.5) circle [radius=0.15];

      \draw [orange,thin] (-3,-3) to (-2,-3.5);

      \draw [black,fill] (-2,-3.5) circle [radius=0.1];
      \draw [thin] (-2,-3.5) to (-1.5,-3);
      \draw [black,solid] (-1.5,-3) circle [radius=0.1];

      \draw [blue,thin] (-1.5,-3) to (-0.5,-3.5);

      \draw [black,fill] (-0.5,-3.5) circle [radius=0.1];
      \draw [thin] (-0.5,-3.5) to (0,-3);
      \draw [black,solid] (0,-3) circle [radius=0.1];

      \draw [green,thin] (0,-3) to (1,-3.5);

      \draw [black,fill] (1,-3.5) circle [radius=0.1];
      \draw [thin] (1,-3.5) to (1.5,-3);
      \draw [black,solid] (1.5,-3) circle [radius=0.1];

      \draw [orange,thin] (1.5,-3) to (2.5,-3.5);

      \draw [black,fill] (2.5,-3.5) circle [radius=0.1];
      \draw [thin] (2.5,-3.5) to (3,-3);
      \draw [black,solid] (3,-3) circle [radius=0.1];

      \draw [orange,thin] (3,-3) to (4,-3.5);

      \draw [black,fill] (4,-3.5) circle [radius=0.1];
      \draw [thin] (4,-3.5) to (4.5,-3);
      \draw [black,solid] (4.5,-3) circle [radius=0.1];

      \draw [orange,thin,dashed] (6,-3) to (6.5,-3.25);

      \draw [orange,thin] (-3,-3) to (-3.5,-5);
      \draw [blue,thin] (-1.5,-3) to (-2,-5);
      \draw [green,thin] (0,-3) to (-.5,-5);
      \draw [orange,thin] (1.5,-3) to (1,-5);
      \draw [orange,thin] (3,-3) to (2.5,-5);
      \draw [orange,thin] (4.5,-3) to (4,-5);

      \draw [orange,thin,dashed] (-3.5,-5) to (-4,-4.75);

      \draw [black,fill] (-3.5,-5) circle [radius=0.1];
      \draw [thin] (-3.5,-5) to (-3,-4.5);
      \draw [black,solid] (-3,-4.5) circle [radius=0.1];

      \draw [blue,thin] (-3,-4.5) to (-2,-5);

      \draw [black,fill] (-2,-5) circle [radius=0.1];
      \draw [thin] (-2,-5) to (-1.5,-4.5);
      \draw [black,solid] (-1.5,-4.5) circle [radius=0.1];
 \draw [green,solid] (-1.5,-4.5) circle [radius=0.15];
  \draw [orange,solid] (-1.5,-6) circle [radius=0.15];
    \draw [orange,solid] (6,-6) circle [radius=0.15];

      \draw [green,thin] (-1.5,-4.5) to (-0.5,-5);

      \draw [black,fill] (-0.5,-5) circle [radius=0.1];
      \draw [thin] (-0.5,-5) to (0,-4.5);
      \draw [black,solid] (0,-4.5) circle [radius=0.1];

      \draw [orange,thin] (0,-4.5) to (1,-5);

      \draw [black,fill] (1,-5) circle [radius=0.1];
      \draw [thin] (1,-5) to (1.5,-4.5);
      \draw [black,solid] (1.5,-4.5) circle [radius=0.1];

      \draw [orange,thin] (1.5,-4.5) to (2.5,-5);

      \draw [black,fill] (2.5,-5) circle [radius=0.1];
      \draw [thin] (2.5,-5) to (3,-4.5);
      \draw [black,solid] (3,-4.5) circle [radius=0.1];

      \draw [orange,thin] (3,-4.5) to (4,-5);

      \draw [black,fill] (4,-5) circle [radius=0.1];
      \draw [thin] (4,-5) to (4.5,-4.5);
      \draw [black,solid] (4.5,-4.5) circle [radius=0.1];

      \draw [orange,thin,dashed] (6,-4.5) to (6.5,-4.75);


      \draw [red,thin,dashed] (-3,-6) to (-3.25,-7);
      \draw [orange,thin,dashed] (0,-6) to (-0.25,-7);
      \draw [orange,thin,dashed] (1.5,-6) to (1.25,-7);
      \draw [orange,thin,dashed] (3,-6) to (2.75,-7);
      \draw [orange,thin,dashed] (4.5,-6) to (4.25,-7);


      \draw [white,fill] (-3,3) circle [radius=0.085];
      \draw [white,fill] (-1.5,3) circle [radius=0.085];
      \draw [white,fill] (0,3) circle [radius=0.085];
      \draw [white,fill] (1.5,3) circle [radius=0.085];
      \draw [white,fill] (3,3) circle [radius=0.085];
      \draw [white,fill] (4.5,3) circle [radius=0.085];

      \draw [white,fill] (-3,1.5) circle [radius=0.085];
      \draw [white,fill] (-1.5,1.5) circle [radius=0.085];
      \draw [white,fill] (0,1.5) circle [radius=0.085];
      \draw [white,fill] (1.5,1.5) circle [radius=0.085];
      \draw [white,fill] (3,1.5) circle [radius=0.085];
      \draw [white,fill] (4.5,1.5) circle [radius=0.085];

      \draw [white,fill] (-3,0) circle [radius=0.085];
      \draw [white,fill] (-1.5,0) circle [radius=0.085];
      \draw [white,fill] (0,0) circle [radius=0.085];
      \draw [white,fill] (1.5,0) circle [radius=0.085];
      \draw [white,fill] (3,0) circle [radius=0.085];
      \draw [white,fill] (4.5,0) circle [radius=0.085];

      \draw [white,fill] (-3,-1.5) circle [radius=0.085];
      \draw [white,fill] (-1.5,-1.5) circle [radius=0.085];
      \draw [white,fill] (0,-1.5) circle [radius=0.085];
      \draw [white,fill] (1.5,-1.5) circle [radius=0.085];
      \draw [white,fill] (3,-1.5) circle [radius=0.085];
      \draw [white,fill] (4.5,-1.5) circle [radius=0.085];

      \draw [white,fill] (-3,-3) circle [radius=0.085];
      \draw [white,fill] (-1.5,-3) circle [radius=0.085];
      \draw [white,fill] (0,-3) circle [radius=0.085];
      \draw [white,fill] (1.5,-3) circle [radius=0.085];
      \draw [white,fill] (3,-3) circle [radius=0.085];
      \draw [white,fill] (4.5,-3) circle [radius=0.085];

      \draw [white,fill] (-3,-4.5) circle [radius=0.085];
      \draw [white,fill] (-1.5,-4.5) circle [radius=0.085];
      \draw [white,fill] (0,-4.5) circle [radius=0.085];
      \draw [white,fill] (1.5,-4.5) circle [radius=0.085];
      \draw [white,fill] (3,-4.5) circle [radius=0.085];
      \draw [white,fill] (4.5,-4.5) circle [radius=0.085];

      \draw [black,solid] (6,3) circle [radius=0.1];
      \draw [red,solid] (6,3) circle [radius=0.15];

      \draw [white,fill] (6,3) circle [radius=0.085];

      \draw [black,solid] (6,1.5) circle [radius=0.1];
      \draw [violet,solid] (6,1.5) circle [radius=0.15];

      \draw [white,fill] (6,1.5) circle [radius=0.085];

\draw [black,fill] (5.5,-0.5) circle [radius=0.1];
      \draw [orange,thin] (5.5,-0.5) to (6,0);
      \draw [black,solid] (6,0) circle [radius=0.1];
      \draw [white,fill] (6,0) circle [radius=0.085];

\draw [black,fill] (5.5,-2) circle [radius=0.1];
      \draw [orange,thin] (5.5,-2) to (6,-1.5);
      \draw [black,solid] (6,-1.5) circle [radius=0.1];
      \draw [white,fill] (6,-1.5) circle [radius=0.085];

\draw [black,fill] (5.5,-3.5) circle [radius=0.1];
      \draw [orange,thin] (5.5,-3.5) to (6,-3);
      \draw [black,solid] (6,-3) circle [radius=0.1];
      \draw [white,fill] (6,-3) circle [radius=0.085];

\draw [black,fill] (5.5,-5) circle [radius=0.1];
      \draw [orange,thin] (5.5,-5) to (6,-4.5);
      \draw [black,solid] (6,-4.5) circle [radius=0.1];
      \draw [white,fill] (6,-4.5) circle [radius=0.085];

\draw [black,fill] (5.5,-6.5) circle [radius=0.1];
      \draw [orange,thin] (5.5,-6.5) to (6,-6);
      \draw [black,solid] (6,-6) circle [radius=0.1];
      \draw [white,fill] (6,-6) circle [radius=0.085];

\draw [black,fill] (4,-6.5) circle [radius=0.1];
      \draw [thin] (4,-6.5) to (4.5,-6);
      \draw [black,solid] (4.5,-6) circle [radius=0.1];
      \draw [white,fill] (4.5,-6) circle [radius=0.085];

\draw [black,fill] (2.5,-6.5) circle [radius=0.1];
      \draw [thin] (2.5,-6.5) to (3,-6);
      \draw [black,solid] (3,-6) circle [radius=0.1];
      \draw [white,fill] (3,-6) circle [radius=0.085];

\draw [black,fill] (1,-6.5) circle [radius=0.1];
      \draw [thin] (1,-6.5) to (1.5,-6);
      \draw [black,solid] (1.5,-6) circle [radius=0.1];
      \draw [white,fill] (1.5,-6) circle [radius=0.085];

\draw [black,fill] (-0.5,-6.5) circle [radius=0.1];
      \draw [thin] (-0.5,-6.5) to (0,-6);
      \draw [black,solid] (0,-6) circle [radius=0.1];
      \draw [white,fill] (0,-6) circle [radius=0.085];

      \draw [black,solid] (-1.5,-6) circle [radius=0.1];
      \draw [white,fill] (-1.5,-6) circle [radius=0.085];

      \draw [black,solid] (-3,-6) circle [radius=0.1];
      \draw [red,solid] (-3,-6) circle [radius=0.15];

      \draw [white,fill] (-3,-6) circle [radius=0.085];

\draw [black,fill] (5.5,-0.5) circle [radius=0.1];
\draw [black,fill] (5.5,-2) circle [radius=0.1];
\draw [black,fill] (5.5,-3.5) circle [radius=0.1];
\draw [black,fill] (5.5,-5) circle [radius=0.1];
\draw [black,fill] (5.5,-6.5) circle [radius=0.1];

       \end{tikzpicture}
    \caption{Paths \textcolor{gray}{$P_{1}$},
    \textcolor{orange}{$P_{2}$},
    \textcolor{blue}{$P_{3}$},
    \textcolor{green}{$P_{4}$},
    \textcolor{red}{$P_{5}$}, and
    \textcolor{violet}{$P_{6}$} in the substructure induced by Type $2$ and Type $3$ triples (for $v=7$)} \label{fig:hex} 
  \end{center}
\end{figure}
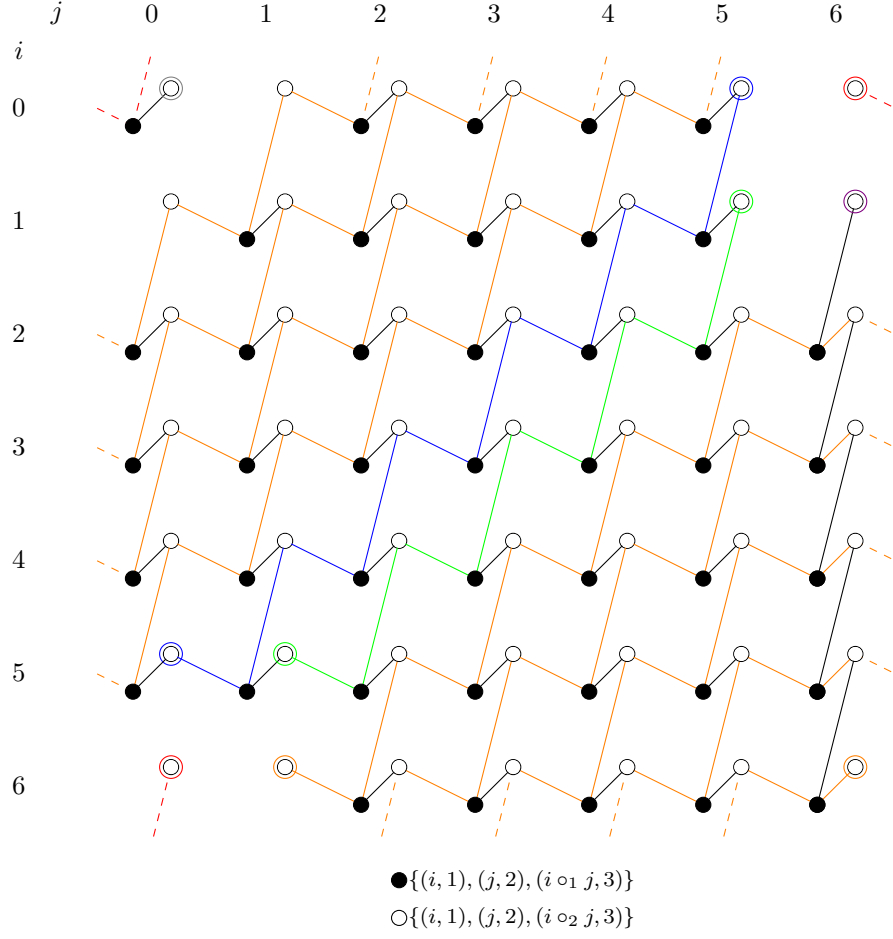

\begin{itemize}

\item[] $P_{1} = \{(0,1), (0,2), (1,3)\}$

\item[] $P_{2} = \{(v-1,1), (1,2),(1,3)\},
\{(v-1,1), (2,2), (1,3)\}, \\
\{(v-2,1), (2,2), (1,3)\},
\ldots,
\{(2,1), (v-2,2), (1,3)\}, \\
\{(2,1), (v-1,2), (1,3)\},
\{(2,1), (v-1,2), (2,3)\},
\{(2,1), (0,2), (2,3)\}, \\
\{(1,1), (0,2), (2,3)\},
\ldots,
\{(3,1), (v-2,2), (2,3)\}, \\
\{(3,1), (v-1,2), (2,3)\},
\{(3,1), (v-1,2), (3,3)\},
\{(3,1), (0,2), (3,3)\}, \\
\{(2,1), (0,2), (3,3)\},
\ldots,
\{(4,1), (v-2,2), (3,3)\},
\{(4,1), (v-1,2), (3,3)\}, \\
\{(4,1), (v-1,2), (4,3)\},
\{(4,1), (0,2), (4,3)\},
\{(3,1), (0,2), (4,3)\},$

$\vdots$

$\{(v-2,1), (v-2,2), (v-3,3)\},
\{(v-2,1), (v-1,2), (v-3,3)\},\\
\{(v-2,1), (v-1,2), (v-2,3)\},
\{(v-2,1), (0,2), (v-2,3)\}, \\
\{(v-3,1), (0,2), (v-2,3)\},
\ldots,
\{(v-1,1), (v-2,2), (v-2,3)\}, \\
\{(v-1,1), (v-1,2), (v-2,3)\},
\{(v-1,1), (v-1,2), (v-1,3)\} $

\item[] $P_{3} = \{(v-2,1), (0,2), (v-1,3)\},
\{(v-2,1), (1,2), (v-1,3)\}, \\
\{(v-3,1), (1,2), (v-1,3)\},
\{(v-3,1), (2,2), (v-1,3)\},
\ldots, \\
\{(1,1), (v-3,2), (v-1,3)\},
\{(1,1), (v-2,2), (v-1,3)\}, \\
\{(0,1), (v-2,2), (v-1,3)\}$

\item[] $P_{4} = \{(v-2,1), (1,2), (0,3)\},
\{(v-2,1), (2,2), (0,3)\}, \\
\{(v-3,1), (2,2), (0,3)\},
\{(v-3,1), (3,2), (0,3)\},
\ldots, \\
\{(2,1), (v-3,2), (0,3)\},
\{(2,1), (v-2,2), (0,3)\},
\{(1,1), (v-2,2), (0,3)\}$

\item[] $P_{5} = \{(0,1), (v-1,2), (0,3)\},
\{(0,1), (0,2), (0,3)\},
\{(v-1,1), (0,2), (0,3)\}$

\item[] $P_{6} = \{(1,1), (v-1,2), (1,3)\}$

\end{itemize}

Our strategy is to show that by using the nine Type 4 triples, these carefully chosen six paths can be stitched together with the three Hamilton paths that are obtained from Type 1 triples in order to construct a Hamilton cycle in the $2$-BIG of $\mathcal{A}'$.

The following classical result by Smith is crucial for our construction. (See \cite{smith, thomason, tutte} for proofs.)

\begin{theorem} \label{smith}
Let $G$ be a $3$-regular graph and let $e \in E(G)$. Then there are an even number of Hamilton cycles in $G$ which pass through $e$.
\end{theorem}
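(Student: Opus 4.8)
The plan is to establish this classical fact by Thomason's \emph{lollipop} exchange argument, which converts the count of Hamilton cycles through $e$ into a parity statement governed by the handshaking lemma. Write $e = xy$ and $n = |V(G)|$, and let $\mathcal{P}$ be the set of all Hamilton paths of $G$ that begin with the edge $xy$, i.e.\ paths $p_0 p_1 \cdots p_{n-1}$ with $p_0 = x$ and $p_1 = y$. Each such path has a fixed initial edge $e$ and a free endpoint $z = p_{n-1}$, and the first observation I would record is that the Hamilton cycles of $G$ through $e$ correspond bijectively to those $P \in \mathcal{P}$ whose free endpoint $z$ is adjacent to $x$: given such a $P$ one closes it up with the edge $zx$, and conversely, deleting from a Hamilton cycle through $e$ the edge at $x$ other than $xy$ recovers a unique path in $\mathcal{P}$ with $z \sim x$. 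Thus it suffices to prove that the number of paths in $\mathcal{P}$ with $z \sim x$ is even.

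To do this I would build an auxiliary graph $\Gamma$ on the vertex set $\mathcal{P}$, joining two paths whenever one is obtained from the other by a \emph{rotation} at the free endpoint: if $z = p_{n-1}$ is adjacent in $G$ to some vertex $p_i$ with $i \le n-3$, then deleting $p_i p_{i+1}$ and adding $z p_i$ yields the new Hamilton path $p_0 \cdots p_i\, p_{n-1} p_{n-2} \cdots p_{i+1}$ with free endpoint $p_{i+1}$. A short check shows that this relation is symmetric, so $\Gamma$ is a genuine (undirected) graph: the reverse rotation deletes the just-added edge $z p_i$ and never disturbs $e$, so it keeps us inside $\mathcal{P}$.

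The heart of the argument is computing $\deg_\Gamma(P)$. Every neighbour of $z$ lies on $P$ because $P$ is Hamiltonian; exactly one of them is the path-neighbour $p_{n-2}$, which yields no rotation, leaving $\deg_G(z)-1 = 2$ candidate rotations. A candidate is admissible unless it would delete $e$, which happens precisely for the neighbour $x = p_0$ (deleting $p_0 p_1 = e$). Hence, using that $G$ is cubic, $\deg_\Gamma(P) = 2$ when $z \not\sim x$ and $\deg_\Gamma(P) = 1$ when $z \sim x$. In other words, the odd-degree vertices of $\Gamma$ are exactly the paths that close up to a Hamilton cycle through $e$. Since the number of odd-degree vertices of any finite graph is even, the number of Hamilton cycles through $e$ is even, as claimed.

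The genuinely delicate points, and where I would concentrate the care, are verifying that the rotation relation is symmetric (so that $\Gamma$ is an honest graph rather than a directed object) and pinning down the exact parity of $\deg_\Gamma(P)$: it is the exclusion of the single rotation that would delete $e$, combined with $3$-regularity, that makes ``odd degree'' coincide with ``closes to a Hamilton cycle through $e$.'' Once these are in place the concluding handshaking step is immediate, and one sees that the only role of the cubic hypothesis is to force $\deg_G(z)-1$ to be even, so that deleting the forbidden rotation flips the parity exactly on the closing paths.
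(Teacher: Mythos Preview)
Your argument is correct and is precisely Thomason's lollipop proof of Smith's theorem. The paper itself does not supply a proof of this classical statement; it simply cites \cite{smith, thomason, tutte}, and your proposal is exactly the argument of the middle reference.
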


We are now ready to prove the following result.

\begin{theorem} \label{3v}
If $v \geq 4$ and there exists a \textnormal{TTS($v$)} with a Hamiltonian \textnormal{$2$-BIG}, then there exists a \textnormal{TTS($3v$)} with a Hamiltonian \textnormal{$2$-BIG}.
\end{theorem}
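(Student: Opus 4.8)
The plan is to prove directly that the cubic $2$-BIG of the design $\mathcal{A}'$ produced by the $3v$ Construction is Hamiltonian, by assembling one Hamilton cycle out of the pieces already laid out before the statement. First I would record the routine bookkeeping: a count of pairs shows that $\mathcal{A}'$ is a genuine TTS($3v$) (every $2$-subset of $V'$ lies in exactly two triples), so its $2$-BIG is $3$-regular and a cyclic $2$-intersecting Gray code is precisely a Hamilton cycle in it. The vertex set of this $2$-BIG splits into the Type~$1$ triples, the Type~$2$ and Type~$3$ triples, and the nine Type~$4$ triples; a count then confirms that the three layer-paths coming from Type~$1$, the six paths $P_1,\dots,P_6$ coming from Types~$2$ and~$3$, and the nine Type~$4$ vertices together account for each vertex exactly once.

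The heart of the argument is to treat the nine Type~$4$ triples as connectors that splice the $3+6=9$ available paths into a single cycle. Each endpoint of each path carries a ``free'' pair, namely the one pair of the endpoint triple that is not consumed along its path, and I would first compute, for every path-endpoint and every Type~$4$ triple, which pairs they share in $V'$. The key observation is that the three pairs of the deleted triple $\{(0,k),(1,k),(v-1,k)\}$ in layer $k$ are exactly the pairs that get freed up in that layer, and that each such freed pair is carried by precisely one Type~$4$ triple, which therefore becomes the $2$-BIG neighbour of the corresponding path-endpoint; for instance in layer $1$ the pairs $\{0,1\},\{0,v-1\},\{1,v-1\}$ are carried by $\{(0,1),(1,1),(1,3)\}$, $\{(0,1),(v-1,1),(v-1,3)\}$, and $\{(1,1),(v-1,1),(0,3)\}$ respectively. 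Tracing these adjacencies, together with the pairs internal to the Type~$4$ block and the free ends of $P_1,\dots,P_6$, yields a prescribed way of concatenating the nine paths through the nine connectors; the remaining task is to verify that this concatenation is one cycle rather than a union of several, and that it uses exactly two of the three edges at each connector.

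For the splicing to fit together I need each layer's Type~$1$ Hamilton path to present the correct two free pairs at its two endpoints, equivalently, I need the Hamilton cycle of the $2$-BIG of $\mathcal{A}$ from which I delete $\{0,1,v-1\}$ to omit the prescribed one of the three edges incident with that block. This is exactly where Theorem~\ref{smith} enters: at the degree-$3$ vertex $\{0,1,v-1\}$ the three ``omitted-edge'' classes of Hamilton cycles have, by Smith's theorem applied to each of the three incident edges, the same parity, and this parity information, combined with the Hamiltonicity of the $2$-BIG of $\mathcal{A}$, is what lets me pin down a Hamilton cycle that frees up the pair demanded by the stitching rather than being at the mercy of an arbitrarily given one (note the quasigroup structure forbids simply relabelling the points to move the omitted edge). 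I expect the main obstacle to be precisely this coordination: verifying that the free pairs produced by the three layer-paths, the prescribed endpoints of $P_1,\dots,P_6$, and the nine Type~$4$ connectors are mutually consistent and close up into a single Hamilton cycle. The pair-counting and the verification of $P_1,\dots,P_6$ are routine once set up; the delicate step is the global adjacency check that no shorter cycle is created, which is the reason the construction pins down such specific Type~$4$ triples and such specific endpoints for the six paths.
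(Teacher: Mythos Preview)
Your overall strategy matches the paper's exactly: stitch the three layer Hamilton paths (Type~1) and the six paths $P_1,\dots,P_6$ (Types~2 and~3) together via the nine Type~4 connectors, with Smith's theorem supplying the needed flexibility in the layer paths. The gap is in precisely how Smith is invoked.

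Your parity observation is correct---the three omitted-edge classes $|C_1|,|C_2|,|C_3|$ at the vertex $\{0,1,v-1\}$ all have the same parity---but it does not deliver what the stitching needs. If all three are even one could have $|C_1|=2$ and $|C_2|=|C_3|=0$, so that every Hamilton cycle of the $2$-BIG of $\mathcal{A}$ omits the \emph{same} edge at $\{0,1,v-1\}$; Smith's theorem does not rule this out at a \emph{fixed} vertex. Since the splice requires two \emph{different} omitted-edge classes (layers~1 and~2 need one, layer~3 needs another), the argument stalls here.

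The paper's resolution is exactly the move your parenthetical dismisses. Smith gives at least two distinct Hamilton cycles $H_1,H_2$; being distinct, they must omit different edges at \emph{some} vertex $\{a,b,c\}$. One then relabels the entire point set of $\mathcal{A}$ \emph{before} applying the $3v$ Construction, sending $\{a,b,c\}$ to $\{v-1,0,1\}$ with the specific assignment that makes the omitted edges of $H_1$ and $H_2$ line up with what the stitching prescribes. The quasigroups $\circ_1,\circ_2$ are then imposed on the relabelled copy of $\mathbb{Z}_v$, so $P_1,\dots,P_6$ are computed relative to the new labelling and nothing breaks; the construction explicitly permits this (``if necessary, the points of $V$ can be permuted''). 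With this single adjustment your proof goes through and coincides with the paper's.
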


\begin{proof}
Suppose $\mathcal{A}=(V, \mathcal{B})$ is a \textnormal{TTS($v$)} whose \textnormal{$2$-BIG} is Hamiltonian. 
By Theorem \ref{smith}, there are at least two Hamilton cycles $H_{1}$ and $H_{2}$ in the $2$-BIG of $\mathcal{A}$. Therefore there is a triple $\{a,b,c\}$ in $\mathcal{B}$ with the property that there are two Hamilton cycles in the 2-BIG of $\mathcal{A}$ that use a different pair of edges incident with $\{a,b,c\}$. Without loss of generality, suppose that $H_{1}$ uses the two edges that connect $\{a,b,c\}$ to the distinct triples containing the pairs $\{a,b\}$ and $\{a,c\}$ respectively, whereas $H_{2}$ uses the two edges that connect $\{a,b,c\}$ to the distinct triples containing the pairs $\{a,b\}$ and $\{b,c\}$ respectively. Relabel the points of $\mathcal{A}$ so that $a$ is mapped to $v-1$, $b$ is mapped to $0$, and $c$ is mapped to $1$.


Apply the $3v$ Construction to form $\mathcal{A}'=(V', \mathcal{B}')$. It is easy to see that each block in $\mathcal{B}'$ is a distinct triple, and that each pair of points of $V'$ appears in exactly two triples in $\mathcal{B}'$. So $\mathcal{A}'=(V', \mathcal{B}')$ is a simple twofold triple system.

The Type 1 triples can be partitioned into three sets, each corresponding to the set $\mathcal{B}$ excluding the triple $\{0, 1, v-1\}$. The $2$-BIG obtained from each of these sets is isomorphic to the $2$-BIG of $\mathcal{A}$ with one vertex deleted. Moreover, these three graphs each have at least two Hamilton paths, namely those which correspond to the remnants of $H_{1}$ and $H_{2}$. In the first of these three graphs, let $P_{7}$ be the remnant of $H_{1}$ so that $P_{7}$ is a Hamilton path from the block containing the pair $\{(0,1),(v-1,1)\}$ (which is adjacent to the Type $4$ triple $\{(0,1), (v-1,1), (v-1,3)\}$) to the block containing the pair $\{(1,1),(v-1,1)\}$ (which is adjacent to the Type $4$ triple $\{(1,1), (v-1,1), (0,3)\}$). In the second of these three graphs, let $P_{8}$ be the remnant of $H_{1}$ so that $P_{8}$ is a Hamilton path from the block containing the pair $\{(0,2),(v-1,2)\}$ (which is adjacent to the Type $4$ triple $\{(0,2), (v-1,2), (1,1)\}$) to the block containing the pair $\{(1,2),(v-1,2)\}$ (which is adjacent to the Type $4$ triple $\{(1,2), (v-1,2), (0,1)\}$). And in the third of these three graphs, let $P_{9}$ be the remnant of $H_{2}$ so that $P_{9}$ is a Hamilton path from the block containing the pair $\{(0,3),(v-1,3)\}$ (which is adjacent to the Type $4$ triple $\{(0,3), (v-1,3), (v-1,2)\}$) to the block containing the pair $\{(0,3),(1,3)\}$ (which is adjacent to the Type $4$ triple $\{(0,3), (1,3), (1,2)\}$).


Then a Hamilton cycle in the $2$-BIG of $\mathcal{A}'$ is given as $\big(P_{1}, \{(0,1), (1,1), \linebreak (1,3)\}, P_{6}, \{(1,1), (0,2), (v-1,2)\}, P_{8}, \{(0,1),(1,2), (v-1,2)\}, P_{5}, \{(v-1,1),\linebreak (0,2), (1,2)\},
P_{2}, \{(v-1,2),(0,3),(v-1,3)\}, P_{9}, \{(1,2),(0,3),(1,3)\}, P_{4}, \{(1,1),\linebreak (v-1,1),(0,3)\},
P_{7}, \{(0,1),(v-1,1),(v-1,3)\}, P_{3}, \{(0,2),(1,3),(v-1,3)\} \big)$. \end{proof}

\section{\boldmath $3v+1$ Construction when $v$ is odd}
\label{Sec:$3v+1$ Construction}
The $3v$ Construction in Section \ref{Sec:$3v$ Construction} can be extended to a $3v+1$ Construction when $v$ is odd. The critical observation here is that the quasigroup $Q_{2}$ of the $3v$ Construction has a transversal when $v$ is odd (in fact, each diagonal of $Q_{2}$ yields a transversal). The particular latin square that is being used guarantees the existence of a transversal only when $v$ is odd, although we note that the general methodology can be used for even values of $v$ if suitable latin squares can be found.

\textbf{$\mathbf{3\textit{v}+1}$ Construction}: Let $v\geq7$ be odd, and let $\mathcal{A}=(V, \mathcal{B})$ be a TTS($v$) on the point set $V=\mathbb{Z}_{v}$ where $ \{0, 1, v-1 \} \in \mathcal{B}$ (if necessary, the points of $V$ can be permuted so that such a block exists). Suppose that the $2$-BIG of $\mathcal{A}$ is Hamiltonian. Let $Q_{1}=(V,\circ_{1})$ and $Q_{2}=(V,\circ_{2})$ be two quasigroups of order $v$ where for $i,j \in \mathbb{Z}_{v}$, $i\circ_{1}j=i+j$ (modulo $v$) and $i\circ_{2}j=i+j+1$ (modulo $v$). Let $L_{1}$ and $L_{2}$ be the corresponding latin squares. Let $T$ be a transversal of $L_{2}$ (note that each diagonal of $L_{2}$ is a transversal).

Use the $3v$ Construction to form a TTS($3v$) $\mathcal{A}'=(V', \mathcal{B}')$ with a Hamiltonian $2$-BIG where $V'=V\times \{1,2,3\}$. Then to form a TTS($3v+1$) $\mathcal{A}''=(V'', \mathcal{B}'')$ where $V''=V'\cup \{\infty \}$, for each $(i,j)$ that is a cell of $T$, replace the triple $\{(i,1),(j,2),(i\circ_{2}j,3)\}$ with the triples $\{(i,1),(j,2),\infty \}$, $\{(i,1), \infty, (i\circ_{2}j,3)\}$, $\{\infty, (j,2),(i\circ_{2}j,3) \}$.

\bigskip

Now we proceed to proving that the TTS($3v+1$) that is obtained from the $3v+1$ Construction as described above has a Hamiltonian $2$-BIG.

\begin{theorem} \label{3v+1}
If $v \geq 7$ is odd and there exists a \textnormal{TTS($v$)} with a Hamiltonian \textnormal{$2$-BIG}, then there exists a \textnormal{TTS($3v+1$)} with a Hamiltonian \textnormal{$2$-BIG}.
\end{theorem}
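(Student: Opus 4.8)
The plan is to leverage the $3v$ Construction and Theorem~\ref{3v} directly: the only change in passing from $\mathcal{A}'$ to $\mathcal{A}''$ is that, for each cell $(i,j)$ of the transversal $T$ of $L_{2}$, the single Type~2 triple $B_{ij}=\{(i,1),(j,2),(i\circ_{2}j,3)\}$ is replaced by the three triples $\{(i,1),(j,2),\infty\}$, $\{(i,1),\infty,(i\circ_{2}j,3)\}$, and $\{\infty,(j,2),(i\circ_{2}j,3)\}$. First I would confirm that $\mathcal{A}''$ really is a TTS($3v+1$). The point $\infty$ occurs only in these $3v$ new triples; each point $(i,1)$ lies in exactly one transversal cell (there is a unique cell of $T$ in row $i$) and in two of that cell's three $\infty$-triples, so the pair $\{(i,1),\infty\}$ is covered exactly twice, and the analogous statements for $(j,2)$ and $(i\circ_{2}j,3)$ follow from $T$ meeting each column and each symbol of $L_{2}$ exactly once. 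Each of the three pairs of $B_{ij}$ that avoid $\infty$ is recovered by exactly one of the three replacement triples, so every pair of $V''$ still occurs with multiplicity two.

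The heart of the argument is to describe how the $2$-BIG changes. Within a single transversal cell the three $\infty$-triples pairwise share $\infty$ together with one further common point, so they form a triangle in the $2$-BIG; moreover each corner of this triangle contains exactly one of the three non-$\infty$ pairs of $B_{ij}$, and hence inherits exactly one of the three edges that $B_{ij}$ had in the $2$-BIG of $\mathcal{A}'$. The key observation --- and the step I expect to require the most care --- is that $\infty$-triples arising from \emph{distinct} cells of $T$ are never adjacent: two such triples share $\infty$, but because $T$ is a transversal their underlying rows, columns, and symbols are all distinct, so they have no second common point. Consequently the $2$-BIG of $\mathcal{A}''$ is obtained from the $2$-BIG of $\mathcal{A}'$ by replacing each of the $v$ pairwise non-adjacent transversal vertices with a triangle, each corner keeping one of that vertex's original three edges; the triangles are vertex-disjoint and induce no edges between one another.

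Finally I would promote the Hamilton cycle $C$ guaranteed by Theorem~\ref{3v} to a Hamilton cycle in $\mathcal{A}''$. On $C$ each transversal vertex $B_{ij}$ is traversed using two of its three incident edges. After expansion I reroute locally: enter the triangle at the corner that inherited the incoming edge, traverse the two triangle edges to visit the remaining two corners (the third corner's inherited edge is simply left unused), and leave at the corner that inherited the outgoing edge. Because the triangles are disjoint and mutually non-adjacent, these local reroutings can be performed simultaneously and independently at all $v$ transversal vertices, and the result is a single spanning cycle. This exhibits a Hamilton cycle in the $2$-BIG of $\mathcal{A}''$, completing the proof.
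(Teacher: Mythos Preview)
Your proposal is correct and follows essentially the same approach as the paper: take the Hamilton cycle $H$ in the $2$-BIG of the TTS$(3v)$ produced by Theorem~\ref{3v}, and at each transversal vertex replace it by a suitably ordered path through the three new $\infty$-triples. You actually supply more justification than the paper's proof (verifying the TTS property and explicitly arguing that $\infty$-triples from distinct transversal cells are non-adjacent), but the underlying construction of the Hamilton cycle in $\mathcal{A}''$ is identical.
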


\begin{proof} Let $\mathcal{A}=(V, \mathcal{B})$ be a TTS($v$) on the point set $V=\mathbb{Z}_{v}$. 
Apply the $3v$ Construction and use the proof of Theorem \ref{3v} to form a TTS$(3v)$ $\mathcal{A}'=(V', \mathcal{B}')$ whose $2$-BIG has a Hamilton cycle $H$. Next, follow the steps described in the $3v+1$ Construction to form a TTS($3v+1$) $\mathcal{A}''=(V'', \mathcal{B}'')$ from this TTS($3v$) $\mathcal{A}'$.

Using $H$ we find a Hamilton cycle $H^{*}$ in the $2$-BIG of $\mathcal{A}''$ as follows: For each $(i,j)$ that is a cell of the transversal $T$ in $Q_{2}$ 
such that in $H$ the triple preceding $\{(i,1),(j,2),(i\circ_{2}j,3)\}$ contains the pair $\{(i,1),(j,2)\}$, and the triple following $\{(i,1),(j,2),(i\circ_{2}j,3)\}$ contains the pair $\{(j,2),(i\circ_{2}j,3)\}$, replace $\{(i,1),(j,2),(i\circ_{2}j,3)\}$  with the triples $\{(i,1),(j,2),\infty \}$, $\{(i,1), \infty, (i\circ_{2}j,3)\}$, $\{\infty, (j,2),(i\circ_{2}j,3) \}$ in this order. Noting that these three triples induce a $K_{3}$ in the $2$-BIG of $\mathcal{A}''$, we observe that an appropriate ordering of these three triples can be similarly chosen in the two remaining cases according to the pairs that are included in the triples preceding and following $\{(i,1),(j,2),(i\circ_{2}j,3)\}$ in $H$. This procedure yields a Hamilton cycle $H^{*}$ in the $2$-BIG of $\mathcal{A}''$.
\end{proof}

\section{\boldmath $3v+3$ Construction when $v$ is odd}
\label{Sec:$3v+3$ Construction}
The $3v$ Construction in Section \ref{Sec:$3v$ Construction} can also be extended to a $3v+3$ Construction when $v$ is odd, however this requires much more delicate arguments. The idea in general is to start proceeding as in the $3v+1$ Construction, but this time three transversals in $Q_{2}$ are to be found. The set of pairs obtained from the triples coming from each transversal is then paired with one of the three new points to form new triples. The major difficulty here is that in order to complete the block set to the block set of a TTS($3v+3$) we need to include two copies of a triple where all three new points appear together, and doing so the $2$-BIG of this initial TTS($3v+3$) becomes disconnected (hence non-Hamiltonian). We overcome this issue by performing a second, very carefully chosen trade. We will show that the particular trade that is being performed guarantees that the $2$-BIG of the resulting TTS($3v+3$) is Hamiltonian.

As with the $3v+1$ Construction, the particular latin square that we are using guarantees the existence of transversals only when $v$ is odd, but the general methodology for the $3v+3$ Construction can potentially be used for even values of $v$ if suitable latin squares can be found that yield three transversals.

\textbf{$\mathbf{3\textit{v}+3}$ Construction}: Let $v\geq7$ be odd, and let $\mathcal{A}=(V, \mathcal{B})$ be a TTS($v$) on the point set $V=\mathbb{Z}_{v}$. 
Suppose that the $2$-BIG of $\mathcal{A}$ is Hamiltonian. Let $Q_{1}=(V,\circ_{1})$ and $Q_{2}=(V,\circ_{2})$ be two quasigroups of order $v$ where for $i,j \in \mathbb{Z}_{v}$, $i\circ_{1}j=i+j$ (modulo $v$) and $i\circ_{2}j=i+j+1$ (modulo $v$). Let $L_{1}$ and $L_{2}$ be the corresponding latin squares. Let $T_{1}$, $T_{2}$ and $T_{3}$ be three diagonal transversals of $L_{2}$ defined as $T_{1}= \{(a,b) \mid a=b\}$, $T_{2}= \{(a,b)\mid a=b+2\}$ and $T_{3}= \{(a,b) \mid a=b+1\}$ (where all arithmetic is done modulo $v$). Note that $T_{1}, T_{2}, T_{3}$ respectively contain the cells $(0,0)$, $(1,v-1)$, and $(1,0)$ (which will later give rise to the triples $\{\infty_{1}, (0,2), (1,3)\}$, $\{(1,1), \infty_{2}, (1,3)\}$ and $\{(1,1), (0,2), \infty_{3}\}$ among other triples).

Use the $3v$ Construction to form a TTS($3v$) $\mathcal{A}'=(V', \mathcal{B}')$ with a Hamiltonian $2$-BIG where $V'=V\times \{1,2,3\}$. Then to form a TTS($3v+3$) $\mathcal{A}''=(V'', \mathcal{B}'')$ where $V''=V'\cup \{\infty_{1}, \infty_{2}, \infty_{3} \}$, for each $(i,j)$ that is a cell of $T_{t}$ ($t= 1,2,3$), replace the triple $\{(i,1),(j,2),(i\circ_{2}j,3)\}$ with the triples $\{(i,1),(j,2),\infty_{t} \}$, $\{(i,1), \infty_{t}, (i\circ_{2}j,3)\}$, $\{\infty_{t}, (j,2),(i\circ_{2}j,3) \}$, except that we deliberately do not include the triples $\{\infty_{1}, (0,2), (1,3)\}$, $\{(1,1), \infty_{2}, (1,3)\}$, $\{(1,1), (0,2),\infty_{3}\}$. Moreover, include in $\mathcal{B}''$ the triples $\{(1,1), (0,2), (1,3)\}$, $\{\infty_{1}, \infty_{2}, (1,3)\}$, $\{\infty_{1}, \linebreak (0,2), \infty_{3}\}$, $\{(1,1), \infty_{2}, \infty_{3}\}$, $\{\infty_{1}, \infty_{2}, \infty_{3}\}$.

It is not difficult to see that each block in $\mathcal{B}''$ is a distinct triple (in particular, the triple $\{\{(1,1), (0,2), (1,3)\}$ was first traded away in the $3v$ Construction when $\mathcal{A}'=(V', \mathcal{B}')$ was being formed, and then it is included back in the block set of $\mathcal{A}''$), and that each pair of points of $V''$ appears in exactly two triples in $\mathcal{B}''$. So $\mathcal{A}''=(V'', \mathcal{B}'')$ is a simple twofold triple system.
\bigskip

Now we proceed to proving that the TTS($3v+3$) that is obtained from the $3v+3$ Construction as described above has a Hamiltonian $2$-BIG.

\begin{theorem} \label{3v+3}
If $v \geq 7$ is odd and there exists a \textnormal{TTS($v$)} with a Hamiltonian \textnormal{$2$-BIG}, then there exists a \textnormal{TTS($3v+3$)} with a Hamiltonian \textnormal{$2$-BIG}.
\end{theorem}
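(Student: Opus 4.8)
The plan is to build a Hamilton cycle in the $2$-BIG of $\mathcal{A}''$ by starting from the explicit Hamilton cycle produced for $\mathcal{A}'$ in the proof of Theorem~\ref{3v} and then surgically incorporating the three new points $\infty_1,\infty_2,\infty_3$ along the three diagonal transversals $T_1,T_2,T_3$. These transversals are pairwise disjoint, since their cells satisfy $a-b\equiv 0,2,1 \pmod v$ and these residues are distinct, so the $3v$ cells correspond to $3v$ distinct Type~2 triples, each occurring exactly once on the Hamilton cycle $H$ of $\mathcal{A}'$. The idea is that, away from a small number of special cells, introducing the $\infty$-points behaves exactly like the single-point $3v+1$ Construction, and only the five triples added by the ``second trade'' require genuinely new care.

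First I would dispose of the \emph{generic} cells exactly as in the proof of Theorem~\ref{3v+1}. For a cell $(i,j)$ of $T_t$, the triple $\{(i,1),(j,2),(i\circ_2 j,3)\}$ is replaced by the three triples containing $\infty_t$, and those three triples induce a $K_3$ in the $2$-BIG; since $H$ meets the original triple along two of its three pairs, the $K_3$ lets us substitute the corresponding vertex of $H$ by a path of length $2$, preserving Hamiltonicity. Distinct cells give vertex-disjoint triangles, so all such substitutions can be carried out simultaneously. The only places where this clean substitution is blocked are the three special cells $(0,0)\in T_1$, $(1,v-1)\in T_2$ and $(1,0)\in T_3$, at which one triangle triple is deliberately deleted by the trade.

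The heart of the argument is the local rerouting at these special cells together with the five inserted triples $\{(1,1),(0,2),(1,3)\}$, $\{\infty_1,\infty_2,(1,3)\}$, $\{\infty_1,(0,2),\infty_3\}$, $\{(1,1),\infty_2,\infty_3\}$ and $\{\infty_1,\infty_2,\infty_3\}$. I would exploit the fact that the cycle $H$ was chosen so that the special $T_1$- and $T_2$-triples are precisely the single-vertex paths $P_1=\{(0,1),(0,2),(1,3)\}$ and $P_6=\{(1,1),(v-1,2),(1,3)\}$, while the special $T_3$-triple $\{(1,1),(0,2),(2,3)\}$ lies on $P_2$; their $H$-neighbours are known Type~4 stitching triples. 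The key computation is to read off the relevant $2$-BIG adjacencies: each deleted triple removes coverage of exactly one pair of $\{(1,1),(0,2),(1,3)\}$, so the reinserted triple $\{(1,1),(0,2),(1,3)\}$ is adjacent precisely to the three Type~4 triples $\{(1,1),(0,2),(v-1,2)\}$, $\{(0,1),(1,1),(1,3)\}$ and $\{(0,2),(1,3),(v-1,3)\}$ (all sitting near $P_1$ and $P_6$ on $H$); moreover each of the two surviving triangle triples at a special cell has a free end attaching to exactly one of the three $\infty$-pair triples, and $\{\infty_1,\infty_2,\infty_3\}$ is adjacent only to those three $\infty$-pair triples.

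The main obstacle will be to thread all four triples carrying two or three $\infty$-symbols into one cycle. Because $\{\infty_1,\infty_2,\infty_3\}$ has degree exactly $3$ with neighbours only the three $\infty$-pair triples, the cycle must pass through it along two of these three edges, and each $\infty$-pair triple must reconnect to the bulk of $H$ through the corresponding broken triangle; the third edge at $\{\infty_1,\infty_2,\infty_3\}$ must be left unused while still covering every vertex. I expect to resolve this by giving an explicit replacement for the stretch of $H$ running through $P_1$, the stitch $\{(0,1),(1,1),(1,3)\}$, $P_6$ and the relevant portion of $P_2$: a single path that enters the $T_1$-fragment, runs through $\{(1,1),(0,2),(1,3)\}$ via two of its Type~4 neighbours, threads the four $\infty$-triples so that $\{\infty_1,\infty_2,\infty_3\}$ is visited, and exits through the $T_2$- and $T_3$-fragments back onto the untouched part of $H$. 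I would finish by checking that the resulting closed walk meets every triple of $\mathcal{B}''$ exactly once, which exhibits the required Hamilton cycle.
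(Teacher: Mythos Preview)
Your proposal is correct and follows essentially the same approach as the paper: perform the $K_3$ substitutions along the three transversals as in the $3v+1$ argument, observe that the three special cells $(0,0),(1,v-1),(1,0)$ each lose one of their three blown-up triples, and then reroute through the five traded triples. The paper organises the endgame slightly more cleanly than your ``single local replacement'' framing: it first carries out \emph{all} $3v$ substitutions (including at the special cells), then deletes the three forbidden triples $\{\infty_1,(0,2),(1,3)\}$, $\{(1,1),\infty_2,(1,3)\}$, $\{(1,1),(0,2),\infty_3\}$, which breaks the blown-up Hamilton cycle into exactly three paths $P',P'',P'''$ with explicitly identified endpoints, and finally writes down the Hamilton cycle
\[
H^{*}=\bigl(\{\infty_1,\infty_2,\infty_3\},\{\infty_1,\infty_2,(1,3)\},P'',\{\infty_1,(0,2),\infty_3\},P''',\{(1,1),(0,2),(1,3)\},P',\{(1,1),\infty_2,\infty_3\}\bigr).
\]
Your adjacency computations for $\{(1,1),(0,2),(1,3)\}$ and $\{\infty_1,\infty_2,\infty_3\}$ are exactly the ones that make this stitching work, so your plan and the paper's proof coincide in substance; the only thing to tidy is that the three breaks are not all contiguous on $H$ (the $T_3$-cell sits inside $P_2$, away from $P_1$ and $P_6$), so thinking in terms of three separate paths rather than one local stretch is the cleaner bookkeeping.
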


\begin{proof} Let $\mathcal{A}=(V, \mathcal{B})$ be a TTS($v$) on the point set $V=\mathbb{Z}_{v}$. 
Apply the $3v$ Construction and use the proof of Theorem \ref{3v} to form a TTS$(3v)$ $\mathcal{A}'=(V', \mathcal{B}')$ whose $2$-BIG has a Hamilton cycle $H$. Now form a TTS($3v+3$) $\mathcal{A}''=(V'', \mathcal{B}'')$ from $\mathcal{A}'$ by following the steps given in the $3v+3$ Construction.

Our aim is to use $H$ to find a Hamilton cycle $H^{*}$ in the $2$-BIG of $\mathcal{A}''$. This can be done as follows: Proceed similarly as in the $3v+1$ Construction. Let $(i,j)$ be a cell of the transversal $T_{t}$ ($t=1,2,3$) in $Q_{2}$ 
such that in $H$ the triple preceding $\{(i,1),(j,2),(i\circ_{2}j,3)\}$ contains the pair $\{(i,1),(j,2)\}$, and the triple following $\{(i,1),(j,2),(i\circ_{2}j,3)\}$ contains the pair $\{(j,2),(i\circ_{2}j,3)\}$. Replace $\{(i,1),(j,2),(i\circ_{2}j,3)\}$ with the triples $\{(i,1),(j,2),\infty_{t} \}$, $\{(i,1), \infty_{t}, (i\circ_{2}j,3)\}$, $\{\infty_{t}, (j,2),(i\circ_{2}j,3) \}$ in this order. (It was noted in the proof of Theorem $\ref{3v+1}$ that an appropriate ordering of these three triples can similarly be chosen in the two remaining cases according to the pairs that are included in the triples preceding and following $\{(i,1),(j,2),(i\circ_{2}j,3)\}$ in $H_{1}$.) Then delete the three triples $\{\infty_{1}, (0,2), (1,3)\}$, $\{(1,1), \infty_{2}, (1,3)\}$, $\{(1,1), (0,2),\infty_{3}\}$ that were excluded in the $3v+3$ Construction. In the $2$-BIG that is induced by the triples in
$(\mathcal{B}'' \setminus \{\{\infty_{1}, (0,2), (1,3)\}, \{(1,1), \infty_{2}, (1,3)\}, \{(1,1), (0,2),\infty_{3}\} \}) \linebreak \cup \{\{(1,1), (0,2), (1,3)\}, \{\infty_{1}, \infty_{2}, (1,3)\}, \{\infty_{1}, (0,2), \infty_{3}\}, \{(1,1), \infty_{2}, \infty_{3}\}, \{\infty_{1}, \linebreak \infty_{2}, \infty_{3}\}\}$, the deletion of these three triples has the effect of breaking a Hamilton cycle into three paths. These three paths can be identified as follows:

\begin{itemize}
\item[] $P'$ has ends $\{(1,1),\infty_{3},(2,3)\}$ and $\{(0,2),(1,3),(v-1,3)\}$,

\item[] $P''$ has ends $\{\infty_{3},(0,2),(2,3)\}$ and $\{\infty_{2},(v-1,2),(1,3)\}$, and

\item[] $P'''$ has ends $\{(0,1),(0,2),\infty_{1}\}$ and $\{(0,1),(1,1),(1,3)\}$.
\end{itemize}

\noindent Then a Hamilton cycle $H^{*}$ in the $2$-BIG of $\mathcal{A}''$ can be given as $H^{*}=(\{\infty_{1}, \infty_{2}, \linebreak \infty_{3}\},
\{\infty_{1}, \infty_{2}, (1,3)\}, P'', \{\infty_{1}, (0,2), \infty_{3}\}, P''', \{(1,1), (0,2), (1,3)\}, P', \{(1,1), \linebreak \infty_{2}, \infty_{3}\})$.
\end{proof}

\section{Main Results} \label{Sec:MainResults}
In this section we prove our main theorem, 
which completely settles the spectrum problem of TTSs with Hamiltonian $2$-BIG.

First we need to settle some small orders, which we will later use to start the induction that will prove Theorem \ref{mainresult}. 
We include an explicit example of a TTS($v$) with a Hamiltonian $2$-BIG for each order $v \in \{4,7,9,10,18\}$. In each example, the ordering of the triples identifies the Hamilton cycle.
\medskip

\textbf{$v=4$:} The unique TTS($4$) has $K_{4}$ as its $2$-BIG.

\begin{center}
$\begin{array}{cccc}
 (\{0, 1, 2\},  &  \{0, 1, 3\},  &  \{0, 2, 3\},  &  \{1, 2, 3\})
\end{array}$
\end{center}

\textbf{$v=7$:} The existence of a TTS($7$) with a Hamiltonian $2$-BIG is known by Theorem \ref{dewar}. We give an explicit example.

\begin{center}
$\begin{array}{ccccccc}
(\{0, 1, 2\},  & \{0, 1, 3\},  & \{0, 3, 5\},  & \{0, 5, 6\},  & \{0, 4, 6\},  & \{0, 2, 4\},  &  \{2, 4, 5\} \\
 \{2, 3, 5\},  & \{2, 3, 6\},  & \{3, 4, 6\},  & \{1, 3, 4\},  & \{1, 4, 5\},  & \{1, 5, 6\},  &  \{1, 2, 6\} )
\end{array} $
\end{center}

\textbf{$v=9$:} The particular TTS($9$) whose blocks are shown below has a Hamiltonian $2$-BIG. Moreover the $2$-BIG of this TTS($9$) is bipartite, and therefore this TTS($9$) is the union of two STSs of order $9$.

\begin{center}
$\begin{array}{cccccc}
 (\{0, 1, 2\},  &  \{0, 1, 3\},  &  \{0, 3, 4\},  &  \{0, 2, 4\},  &
  \{2, 4, 6\},  &  \{1, 4, 6\}, \\

  \{1, 4, 8\},  &  \{1, 7, 8\},   &
  \{1, 6, 7\},  &  \{2, 6, 7\},  &  \{2, 3, 7\},  &  \{3, 4, 7\}, \\

  \{4, 5, 7\},  &  \{4, 5, 8\},  &  \{2, 5, 8\},  &  \{2, 3, 8\},  &
  \{3, 6, 8\},  &  \{0, 6, 8\}, \\

  \{0, 7, 8\},  &  \{0, 5, 7\},  &
  \{0, 5, 6\},  &  \{3, 5, 6\},  &  \{1, 3, 5\},  &  \{1, 2, 5\}) \\
\end{array}$
\end{center}

\textbf{$v=10$:}  There are $394$ non-isomorphic simple TTSs of order $10$.  The largest girth for their $2$-BIGs is girth $7$, which occurs only for a single TTS($10$).  This particular TTS($10$) has a Hamilton cycle as follows.

\begin{center}
$\begin{array}{cccccc}
 (\{ 0, 1 ,2  \},  &  \{ 0, 1, 3 \},  &  \{ 0, 3, 5  \},  &  \{ 0, 5, 7  \},  &  \{ 0, 7, 9  \},  &  \{0, 8, 9   \}, \\

  \{ 0, 6, 8  \},  &  \{ 0, 4, 6  \},   &  \{ 4, 6, 9  \},  &  \{ 4, 5, 9 \},  &  \{ 1, 4, 5 \},  &  \{ 1, 3, 4  \}, \\

  \{ 3, 4, 7  \},  &  \{ 3, 7, 9  \},  &  \{ 3, 6, 9  \},  &  \{ 2, 3, 6  \},  &  \{ 2, 3, 8  \},  &  \{ 3, 5, 8  \}, \\

  \{ 5, 6, 8  \},  &  \{ 1, 5, 6  \},  &  \{  1, 6, 7 \},  &  \{  2, 6, 7 \},  &  \{ 2, 5, 7  \},  &  \{  2, 5, 9 \}, \\

  \{ 1, 2, 9  \},  &  \{ 1, 8, 9  \},  &  \{ 1, 7, 8  \},  &  \{ 4, 7, 8  \},  &  \{ 2, 4, 8  \},  &  \{ 0, 2, 4 \}) \\
\end{array}$
\end{center}
In the $2$-BIG of this TTS($10$)
\begin{center}
$\begin{array}{ccccccc}
 (\{ 0,1,2 \},  &  \{ 0,1,3\},  &  \{ 0,3,5\},  &  \{ 0,5,7 \},  &  \{ 2,5,7 \},  &  \{ 2,5,9 \} & \{ 1,2,9 \}) \\
\end{array}$
\end{center}
is a cycle of length $7$.

\medskip

\textbf{$v=18$:} The $2$-BIG of the following TTS($18$) is Hamiltonian and has girth~$5$.

\begin{center}
$\begin{array}{cccccc}
(\{ 0, 1, 2  \}, & \{  0, 1, 3 \}, & \{  0, 3, 5  \}, & \{ 0, 5, 6   \}, & \{ 0, 4, 6  \}, & \{ 0, 2, 4  \}, \\

 \{  2, 3, 4  \}, & \{ 1, 3, 4  \}, & \{  1, 4, 8  \}, & \{  1, 6, 8  \}, & \{  1, 5, 6  \}, & \{  1, 5, 7  \},  \\

 \{  4, 5, 7  \}, & \{ 4, 5, 13  \}, & \{  4, 13, 16  \}, & \{  4, 10, 16  \}, & \{ 2, 10, 16  \}, & \{  2, 10, 13  \}, \\

 \{  9, 10, 13  \}, & \{ 1, 9, 13  \}, & \{  1, 9, 12  \}, & \{ 1, 10, 12  \}, & \{  1, 10, 14  \}, & \{ 1, 14, 17  \},  \\

 \{  1, 16, 17  \}, & \{ 0, 16, 17  \}, & \{  0, 14, 16  \}, & \{  2, 14, 16  \}, & \{  2, 14, 15  \}, & \{ 3, 14, 15\},  \\

 \{ 3, 14, 17  \}, & \{  3, 13, 17  \}, & \{  5, 13, 17  \}, & \{5, 10, 17 \}, & \{5, 10, 14 \}, & \{  5, 9, 14  \},  \\

 \{ 4, 9, 14 \}, & \{  4, 6, 14  \}, & \{  6, 13, 14  \}, & \{ 8, 13, 14  \}, & \{ 8, 11, 14\}, & \{ 7, 11, 14  \},  \\

 \{ 7, 12, 14  \}, & \{ 0, 12, 14  \}, & \{ 0, 12, 13  \}, & \{8, 12, 13  \}, & \{8, 12, 15  \}, & \{7, 12, 15  \},  \\

 \{  4, 7, 15  \}, & \{4, 10, 15 \}, & \{  8, 10, 15\}, & \{ 0, 8, 10\}, & \{  0, 10, 11  \}, & \{ 6, 10, 11  \},  \\

 \{  2, 6, 11  \}, & \{  2, 6, 8  \}, & \{  2, 5, 8 \}, & \{ 3, 5, 8  \}, & \{ 3, 7, 8  \}, & \{ 0, 7, 8  \},  \\

 \{  0, 7, 9  \}, & \{  0, 9, 11  \}, & \{  4, 9, 11  \}, & \{ 4, 11, 12  \}, & \{  4, 12, 17 \}, & \{  4, 8, 17  \},  \\

 \{  8, 9, 17  \}, & \{  6, 9, 17  \}, & \{  6, 12, 17  \}, & \{  6, 12, 16  \}, & \{  5, 12, 16  \}, & \{  2, 5, 12  \},  \\

 \{ 2, 9, 12  \}, & \{  2, 3, 9  \}, & \{3, 9, 10 \}, & \{ 3, 10, 12 \}, & \{ 3, 11, 12 \}, & \{ 3, 11, 13 \},  \\

 \{ 1, 11, 13  \}, & \{ 1, 11, 15 \}, & \{ 1, 15, 16 \}, & \{ 3, 15, 16  \}, & \{  3, 6, 16  \}, & \{ 3, 6, 7  \},  \\

 \{ 6, 7, 10  \}, & \{  7, 10, 17  \}, & \{  7, 11, 17  \}, & \{ 2, 11, 17  \}, & \{  2, 15, 17  \}, & \{ 0, 15, 17  \},  \\

 \{ 0, 13, 15  \}, & \{  6, 13, 15  \}, & \{ 6, 9, 15  \}, & \{  5, 9, 15 \}, & \{  5, 11, 15  \}, & \{ 5, 11, 16  \},  \\

 \{ 8, 11, 16  \}, & \{  8, 9, 16 \}, & \{  7, 9, 16 \}, & \{  7, 13, 16  \}, & \{ 2, 7, 13  \}, & \{ 1, 2, 7 \})  \\
\end{array}$

\end{center}
In the $2$-BIG of this TTS($18$)
\begin{center}
$\begin{array}{ccccc}
 (\{ 0,1,2 \},  &  \{ 0,1,3\},  &  \{ 1,3,4\},  &  \{ 2,3,4 \},  &  \{0,2,4 \}) \\
\end{array}$
\end{center}
is a cycle of length $5$.
\begin{theorem} \label{mainresult}
Suppose that $v\geq3$. There exists a TTS($v$) with a Hamiltonian $2$-BIG if and only if $v \equiv 0,1$ (modulo $3$), except for $v \in \{3,6\}$.
\end{theorem}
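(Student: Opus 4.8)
The plan is to first dispose of the ``only if'' direction. Since a TTS($v$) exists precisely when $v\equiv 0$ or $1$ (mod $3$) with $v\neq 1$ (Bhattacharya), this congruence condition is forced, and it remains only to rule out $v\in\{3,6\}$. For $v=3$ the sole TTS consists of the block $\{0,1,2\}$ taken twice; these two copies meet in three points rather than two, so the $2$-BIG has no edges and is not Hamiltonian. For $v=6$ I would first note that a Hamiltonian (hence connected) $2$-BIG forces the underlying TTS to be simple: if a block $B$ were repeated, then the two copies would already account for both occurrences of each pair inside $B$, so no further block could meet $B$ in two points, and both copies of $B$ would be isolated vertices. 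Consequently any TTS($6$) with a connected $2$-BIG is the unique simple TTS($6$), whose $2$-BIG is the (non-Hamiltonian) Petersen graph by Mahmoodian's observation; hence no TTS($6$) has a Hamiltonian $2$-BIG.

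\textbf{Sufficiency.} For the ``if'' direction I would use strong induction over the admissible orders $v\equiv 0,1$ (mod $3$) with $v\geq 4$ and $v\neq 6$. The base of the induction is supplied by the explicit designs exhibited above for $v\in\{4,7,9,10,18\}$ together with the orders furnished by Theorem~\ref{dewar} (notably $v=13$ and $v=15$). For the inductive step, given an admissible $v$ beyond the base cases, the goal is to write $v=f(v')$ for one of the five construction maps, where $v'$ is a strictly smaller admissible order with $v'\neq 6$ (and meeting the parity and size hypotheses of the relevant theorem); the inductive hypothesis then provides a TTS($v'$) with a Hamiltonian $2$-BIG, and Theorem~\ref{2v+1}, \ref{2v+2}, \ref{3v}, \ref{3v+1}, or \ref{3v+3} delivers the desired TTS($v$).

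The bookkeeping is naturally organized by $v\bmod 12$, since each construction shifts this residue predictably. The doubling map $2v'+1$ realizes residues $1,3,7,9$: for instance $v\equiv 9$ reduces via the even input $v'=(v-1)/2\equiv 4,10$, and $v\equiv 7$ via an odd input $v'=(v-1)/2\geq 7$ (the value $v=13$ is kept as a base case precisely because there $2v'+1$ would demand the forbidden input $v'=6$). The residues $4$ and $10$ are handled by $2v'+2$ when $v'\equiv 1$ or $4$ (mod $6$), and by $3v'+1$ with $v'$ odd when the size constraint on $2v'+2$ fails, as at $v=22=3\cdot 7+1$. Finally the residues $0$ and $6$ (mod $12$) are reached by the tripling maps: writing $v=12m$ one uses $3v'$ with $v'=4m$ (admissible iff $m\not\equiv 2$) or $3v'+3$ with $v'=4m-1$ (admissible iff $m\not\equiv 0$), and writing $v=12m+6$ one uses $3v'$ with $v'=4m+2$ or $3v'+3$ with $v'=4m+1$; in each residue the two admissibility conditions jointly cover all $m$.

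\textbf{Main obstacle.} I expect the crux to lie exactly in these tripling residues $0$ and $6$ (mod $12$). There one must select between $3v'$ and $3v'+3$ according to $v/3\bmod 3$, since for a given $v$ typically only one of the two candidate inputs is itself congruent to $0$ or $1$ (mod $3$); one must simultaneously avoid the forbidden input $v'=6$ (which arises only at $v=18$, forcing $18$ to remain a base case) and respect the thresholds $v'\geq 4$ for $3v'$ and odd $v'\geq 7$ for $3v'+3$. The remaining work is a careful but routine verification that every admissible $v\geq 19$ admits such a reduction and that the finitely many unreduced small orders are precisely those secured as base cases; the only conceptually new ingredient beyond this casework is the simplicity argument underpinning the non-existence result at $v=6$.
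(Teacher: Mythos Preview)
Your proposal is correct and follows essentially the same strategy as the paper: necessity via Bhattacharya plus the Petersen-graph observation for $v=6$, and sufficiency via strong induction organised by $v\bmod 12$, feeding the five constructions (Theorems~\ref{2v+1}, \ref{2v+2}, \ref{3v}, \ref{3v+1}, \ref{3v+3}) with strictly smaller admissible inputs and treating the finitely many small orders as base cases. The only noteworthy difference is in the distribution of work among the tools: the paper leans on Theorem~\ref{dewar} to cover residues $1,3,4,7$ (falling back on $2v'+1$ or $2v'+2$ only when the $\bmod\ 5$ condition fails) and uses $3v'$ and $3v'+1$ as the primary tools for residues $9$ and $10$, whereas you use $2v'+1$ uniformly for residues $1,3,7,9$ and $2v'+2$ (with $3v'+1$ only at $v=22$) for residues $4,10$, invoking Theorem~\ref{dewar} solely for the base values $13$ and $15$; both bookkeeping schemes are valid.
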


\begin{proof}
First note that by \cite{bhattacharya} a TTS($v$) exists only if $v \equiv 0,1$ (modulo $3$). The $2$-BIG of the unique TTS($3$) consists of two isolated vertices, and therefore it is non-Hamiltonian. There is a unique simple TTS($6$), and it has a connected $2$-BIG, however it is not Hamiltonian. In fact, the $2$-BIG of the unique simple TTS($6$) is the Petersen graph \cite{mahmoodian}.

To prove the sufficiency of the condition, suppose that $v \equiv 0,1$ (modulo $3$) and note that an example for a TTS($v$) with a Hamiltonian $2$-BIG is already given for $v \in \{4,7,9,10,18\}$. Moreover a TTS($v$) with a Hamiltonian $2$-BIG exists for $v \in \{13,15,16,19\}$ by Theorem \ref{dewar}. For $v =12$ and $v=21$, a TTS($v$) with a Hamiltonian $2$-BIG can be constructed using the $3v$ Construction together with respectively a TTS($4$) and TTS($7$) whose $2$-BIGs are Hamiltonian. This establishes that for $7 \leq v \leq 21$, $v$ is in the spectrum of TTSs with a Hamiltonian $2$-BIG.

Let $v = 12k +r\geq 22$ (for some positive $k$, and non-negative $r$ such that $r\leq 10$), and inductively suppose that for all $t \equiv 0,1$ (modulo $3$) where $7 \leq t < v$ there exists a TTS($t$) with a Hamiltonian $2$-BIG. Then to construct a TTS($v$) with a Hamiltonian $2$-BIG one can apply the following procedure:

\begin{itemize}

\item[] If $r=0$, use the $3v$ Construction, unless $k \equiv 2$ (modulo $3$), in which case use the $3v+3$ Construction.

\item[] If $r=1$, Theorem \ref{dewar} guarantees the existence of a TTS($v$) with a Hamiltonian $2$-BIG, unless $k \equiv 2$ (modulo $5$), in which case use the $2v+1$ Construction.

\item[] If $r=3$ or $r=7$, Theorem \ref{dewar} guarantees the existence of a TTS($v$) with a Hamiltonian $2$-BIG.

\item[] If $r=4$, Theorem \ref{dewar} guarantees the existence of a TTS($v$) with a Hamiltonian $2$-BIG, unless $k \equiv 3$ (modulo $5$), in which case use the $2v+2$ Construction.

\item[] If $r=6$, use the $3v$ Construction, unless $k \equiv 0$ (modulo $3$), in which case use the $3v+3$ Construction.


\item[] If $r=9$, use the $3v$ Construction, unless $k \equiv 2$ (modulo $3$), in which case use the $2v+1$ Construction.

\item[] If $r= 10$ use the $3v+1$ Construction, unless $k \equiv 2$ (modulo $3$), in which case use the $2v+2$ Construction.
\end{itemize}
This finishes the proof. \end{proof}

We conclude with an easy consequence of Theorem \ref{mainresult}.

\begin{theorem}
Suppose that $v\geq3$. There exists a TTS($v$) with a $2$-BIG that has a Hamilton path if and only if $v \equiv 0,1$ (modulo $3$), except for $v=3$.
\end{theorem}

\begin{proof}
This follows from Theorem \ref{mainresult} and the observation that there is no simple TTS($3$), whereas the $2$-BIG of the unique simple TTS($6$) is the Petersen graph, which has a Hamilton path.
\end{proof}

\section{Acknowledgements}

A.~Erzurumluo\u{g}lu acknowledges research support from AARMS.
D.A.\ Pike acknowledges research grant support from NSERC (grant numbers RGPIN-04456-2016 and 217627-2010), CFI and IRIF, as well as computational support from Compute Canada and its consortia (especially ACENET, SHARCNET and WestGrid) and The Centre for Health Informatics and Analytics of the Faculty of Medicine at Memorial University of Newfoundland.


\begin{thebibliography}{99}





\bibitem{AHM1990}B.\ Alspach, K.\ Heinrich and B.\ Mohar.
        A note on Hamilton cycles in block-intersection graphs.
        Finite Geometries and Combinatorial Designs -- Contemporary Mathematics 111 (1990), 1--4.

\bibitem{bhattacharya}
K.N.\ Bhattacharya. A note on two-fold triple systems. {\it Sankhy\={a}} 6 (1943), 313--314.

\bibitem{brouwer}
A.E.\ Brouwer.
On the size of a maximum transversal in a Steiner triple system. {\it Canad. J. Math.} 33 (1981), 1202--1204.


\bibitem{CJ1984}
M.J.\ Colbourn and J.K.\ Johnstone.
Twofold triple systems with a minimal change property.  {\it Ars Combin.} 18 (1984), 151--160.

\bibitem{Dehon1983}
M.\ Dehon.  On the existence of 2-designs $S_\lambda(2,3,v)$ without repeated blocks.
{\it Discrete Math.\/} 43 (1983), 155--171.

\bibitem{dewar}
M.\ Dewar and B.\ Stevens. Ordering Block Designs: Gray Codes, Universal Cycles and Configuration Orderings. Springer, 2012.

\bibitem{firstpaper}
A.\ Erzurumluo\u{g}lu and D.A.\ Pike, Twofold triples systems without $2$-intersecting Gray codes. {\it Des. Codes Cryptogr.} to appear.

\bibitem{garey}
M.R.\ Garey, D.S.\ Johnson and R.E.\ Tarjan. The planar Hamiltonian circuit problem is NP-complete.
{\it SIAM J. Comput.} 5 (1976), 704--714.

\bibitem{gionfriddo}
M.\ Gionfriddo. On the number of pairwise disjoint blocks in a Steiner system. Combinatorial design theory, 189--195,
North-Holland Math. Stud., 149, North-Holland, Amsterdam, 1987.

\bibitem{HPR1999}P.\ Hor\'{a}k, D.A.\ Pike and M.E.\ Raines.
      Hamilton cycles in block-intersection graphs of triple systems.
      {\it J.\ Combin. Des.} 7 (1999), 243--246.

\bibitem{HR1988}P.\ Hor\'{a}k and A.\ Rosa.
        Decomposing Steiner triple systems into small configurations.
        {\it Ars Combin.} 26 (1988), 91--105.

\bibitem{JPS2011}A.T.\ Jesso, D.A.\ Pike and N.\ Shalaby.
        Hamilton cycles in restricted block-intersection graphs.
        {\it Des. Codes Cryptogr.} 61 (2011), 345--353.


\bibitem{lindnerphelps}
C.C.\ Lindner and K.T.\ Phelps.
A note on partial parallel classes in Steiner systems. {\it Discrete Math.} 24 (1978), 109--112.

\bibitem{thebook}
C.C.\ Lindner and C.A.\ Rodger. Design Theory, Second Edition. CRC Press, 2009.

\bibitem{lofaro}
G.\ Lo Faro. On the size of partial parallel classes in Steiner systems STS(19) and STS(27).
{\it Discrete Math.} 45 (1983), 307--312.

\bibitem{mahmoodian}
E.S.\ Mahmoodian. The intersection graph of B[3,2;6] design is the Petersen graph.
{\it Bull. Inst. Combin. Appl.} 4 (1992), 110.


\bibitem{mullin}
R.C.\ Mullin and C.C.\ Lindner.
Lower bounds for maximal partial parallel classes in Steiner systems. {\it J. Combin. Theory Ser. A} 26 (1979), 314--318.


\bibitem{smith}
C.A.B.\ Smith.
Map colourings and linear mappings. Combinatorial Mathematics and its Applications, 259--283, Proc. Conf. Oxford, Academic Press, London, 1971.

\bibitem{stinson}
D.R.\ Stinson and W.D. Wallis.
Two-fold triple systems without repeated blocks. {\it Discrete Math.} 47 (1983), 125--128.

\bibitem{thomason}
A.G.\ Thomason.
Hamiltonian cycles and uniquely edge colourable graphs. {\it Ann. Discrete Math.} 3 (1978), 259--268.

\bibitem{tutte} W.T.\ Tutte.
On Hamiltonian circuits. {\it J. London Math. Soc.} 21 (1946), 98--101.

\bibitem{tuza}
Z.\ Tuza. Large partial parallel classes in Steiner systems. Combinatorics '90 (Gaeta, 1990), \textit{Ann. Discrete Math.} 52, North-Holland, Amsterdam (1992), 545--548.

\bibitem{buggenhaut}
J.\ van Buggenhaut. On the existence of 2-designs $S_{2}(2,3,v)$ without repeated blocks.
{\it Discrete Math.} 8 (1974), 105--109.

\bibitem{woolbright}
D.E.\ Woolbright. On the size of partial parallel classes in Steiner systems. Topics on Steiner systems. \textit{Ann. Discrete Math.} 7 (1980), 203--211.

\end{thebibliography}
\end{document}